 \let\emptyset \undefined
 \let\ge       \undefined
 \let\le       \undefined
\theoremstyle{plain}
\newtheorem{theorem}{Theorem}[section]
\theoremstyle{remark}
\newtheorem{remark}[theorem]{Remark}
\newtheorem{example}[theorem]{Example}
\theoremstyle{plain}
\newtheorem{lemma}[theorem]{Lemma}
\newtheorem{proposition}[theorem]{Proposition}
\newtheorem{definition}[theorem]{Definition}
\newtheorem{assumption}[theorem]{Assumption}
\numberwithin{equation}{section}
\def\R{{\mathbb R}}
\newcommand{\E}{{\mathbb E}}
\renewcommand{\P}{{\mathbb P}}
\newcommand{\F}{{\mathscr F}}
\newcommand{\G}{{\mathscr G}}
\renewcommand{\H}{{\mathscr H}}
\renewcommand{\a}{\alpha}
\renewcommand{\b}{\beta}
\newcommand{\g}{\gamma}
\renewcommand{\d}{\delta}
\newcommand{\e}{\varepsilon}
\newcommand{\Om}{\Omega}
\newcommand{\embed}{\hookrightarrow}
\newcommand{\s}{^*}
\newcommand{\lb}{\langle}
\newcommand{\rb}{\rangle}
\newcommand{\calL}{{\mathscr L}}
\newcommand{\n}{\Vert}
\newcommand{\one}{{{\bf 1}}}
\begin{document}

\title[Equivalence of laws for SPDEs driven by an fBm]
{Equivalence of laws and null controllability for {\larger SPDE}s driven by
a fractional Brownian motion}

\author{Bohdan Maslowski}
\address{Faculty of Mathematics and Physics\\
Charles University in Prague\\
Sokolovska 83, 186 75 Prague 8\\
Czech Republic}	
\email{maslow@karlin.mff.cuni.cz}

\author{Jan van Neerven}
\address{Delft Institute of Applied Mathematics
\\Delft University of Technology
\\P.O. Box 5031, 2600 GA Delft
\\The Netherlands}
\email{J.M.A.M.vanNeerven@TUDelft.nl}

\keywords{Equivalence of laws, null controllability, fractional Brownian motion, stochastic evolution equations}

\subjclass[2000]{Primary: 60H15; Secondary: 60H05, 28C20}

\date\today

\begin{abstract}
We obtain necessary and sufficient conditions for equivalence of law
for linear stochastic evolution equations driven by a general Gaussian noise by
identifying the suitable  space of controls for the corresponding deterministic
control problem.
This result is applied to semilinear (reaction-diffusion)
equations driven by a fractional Brownian motion. We establish the equivalence
of continuous dependence of laws of solutions to semilinear equations
on the initial datum in the topology of pointwise convergence of measures
and null controllability for the corresponding deterministic control problem.
\end{abstract}

\thanks{The second named author gratefully acknowledges the support by
VICI subsidy 639.033.604 in the `Vernieuwingsimpuls' programme of the
Netherlands Organization for Scientific Research (NWO). The first author was
supported by GACR grant no. P201/10/0752.}

\maketitle

\section{Introduction}\label{sec:1}

The equivalence (mutual absolute continuity) of probability distributions of
solutions to infinite-dimensional stochastic equations has been extensively
studied for equations driven by Wiener process and is of importance in the
investigation of large-time behaviour and ergodicity of solutions (see e.g.
\cite{MM,Ma1,PZ} for early works on stochastic reaction-diffusion
equations, the book \cite{DaZ2}, and the references therein). Indeed, by
Doob's theorem and its improvements \cite{Se,St} it enables proofs of
strong ergodicity and mixing of the Markov semigroups.

The first results in this direction for stochastic linear equations appeared in
the pioneering monograph by Da Prato and Zabczyk \cite{DaZ} (cf. also the earlier
paper by J. Zabczyk \cite{Za}), where the following statement may be found.
Consider the equation
\begin{equation}\label{I1}
\left\{
\begin{aligned}
 dX(t)& = AX(t)\,dt + B\, dW,\\ X(0)& =x,
\end{aligned}\right.
\end{equation}
in a real separable Hilbert space $E$, where $A$ is a densely defined linear
operator on $E$ generating a strongly continuous semigroup, $B$ is bounded on
$E$ and $W$ is a standard cylindrical Wiener process in $E$. Assuming the existence
of a unique $E$-valued mild solution, for a given $T>0$ the
probability laws of the solutions are equivalent for different values of the
initial datum $x\in E$ if and only if the corresponding deterministic controlled
system
\begin{equation}\label{I2}
\left\{
\begin{aligned}y' & = Ay + Bu, \\ y(0)&=x,
\end{aligned}\right.
\end{equation}
is exactly null controllable at $T$ with controls from the set $L^2(0,T;E)$,
i.e., for each $x\in E$ there exists a control $u\in L^2(0,T;E)$ such that
$y(T)=0$. This made it possible to utilize the well-developed deterministic
controllability theory for proving equivalence of laws for the stochastic equation
\eqref{I1}.

In the non-Markovian case the problem of equivalence of laws was addressed in
the paper \cite{DMP1} in the so-called diagonal case (when the operators $A$ and
$B$ commute) with a fractional Brownian motion as the driving process. Here we
extend the result of this paper by providing necessary and sufficient conditions
for the equivalence of laws (Example 3.4). This toy model shows the
interesting (but natural) feature that smoother paths of the driving process
(i.e. a bigger Hurst parameter) require more restrictive conditions for
equivalence of laws (which, of course, is just the converse with
respect to regularity problems).

In Section \ref{sec:2}
the first part of the paper we obtain an extension of the above-mentioned
result from \cite{DaZ, Za} to the case of general Gaussian noise. The state
space $E$ is a general Banach space and the objective is to identify the suitable
space of controls such
that the above-described relation between equivalence of laws for \eqref{I1} and
null controllability for \eqref{I2} holds true.

In Sections \ref{sec:3} and \ref{sec:4} these results are applied to semilinear (reaction-diffusion)
equations driven by a fractional Brownian motion; this
part is based on the density formula established in \cite{DMP3}. The
main result here is the equivalence of continuous dependence of laws of solutions to semilinear equations
on the initial datum in topology of pointwise convergence of measures
and null controllability for the corresponding control problem. As in
the Markov case, we call this property the {\em strong Feller property}.
The basic idea of the proof of this result is due to Maslowski and Seidler
\cite{MS}. In these results, for simplicity we assume that the space $E$ is Hilbert.
Using the ideas of \cite{BrzNeeSal}, the results can be extended to (type $2$) Banach
spaces, but our examples would not be improved much by this extension.

\section{Null controllability and equivalent of laws} \label{sec:2}

Let $\H$ be a real Hilbert space with inner product $[\cdot,\cdot]_\H$,
and let $W$ be an $\H$-isonormal process on some probability space
$(\Om,\F,\P)$.
By definition, this means that $W$ is a bounded linear mapping from $\H$ into
$L^2(\Om)$
such that the following two conditions are satisfied:
\begin{enumerate}
 \item for all $h\in \H$, the random variable $W(h)$ is centred Gaussian;
 \item for all $h_1,h_2\in\H$,
$$ \E W(h_1)W(h_2) = [h_1,h_2]_\H.$$
\end{enumerate}

Let $E$ be a real Banach space. We denote by $\g(\H,E)$ the completion of the
algebraic tensor product
$\H\otimes E$ with respect to the norm
$$ \Big\n \sum_{n=1}^N h_n \otimes x_n \Big\n_{\g(\H,E)}^2 = \E \Big\n
\sum_{n=1}^N \g_n x_n\Big\n^2,$$
where $(h_n)_{n=1}^N$ is an orthonormal sequence in $\H$, $(x_n)_{n=1}^N$ is a
sequence in $E$, and
$(\g_n)_{n=1}^N$ is Gaussian sequence, i.e., a sequence of independent standard
real-valued Gaussian random variables.
The natural inclusion mapping $\H\otimes E\embed \calL(\H,E)$ extends to a
continuous inclusion mapping
$\g(\H,E)\embed \calL(\H,E)$; this allows us to identify elements of $\g(\H,E)$
with bounded operators acting from
$\H$ to $E$. The operators belonging to $\g(\H,E)$ are called the $\g$-{\em
radonifying} operators from $\H$ to $E$.

The mapping $W:\H\to L^2(\Om)$ extends to an isometry, also denoted by $W$, from
$\g(\H,E)$
into $L^2(\Om;E)$ by putting
$$ W(h\otimes x) := W(h)\otimes x.$$
Moreover, $\lb W(R),x\s\rb = W(R\s x\s)$ for all $R\in \g(\H,E)$ and $x\s\in
E\s$.
This extension can be viewed as an abstract stochastic integral, as can be seen
by considering the case
$\H = L^2:= L^2(0,T)$ (see \cite{Nee, NeeWei}):

\begin{example}\label{ex:BM}
If $W$ is an $L^2$-isonormal process, then $B_t := W(\one_{(0,t)})$ defines a
standard real-valued
Brownian motion  $(B_t)_{t\in [0,T]}$ and
for all $h\in H$ we have
$$ W(h) = \int_0^T h(t)\,dB_t.$$
Moreover, the extension $W:\g(L^2,E)\to L^2(\Om;E)$ coincides with the
$E$-valued stochastic integral introduced in \cite{NeeWei}.
Conversely, if $(B_t)_{t\in [0,T]}$ is a standard real-valued
Brownian motion $(B_t)_{t\in [0,T]}$, then above identity defines an
$L^2$-isonormal process $W$.
\end{example}

Thus, $L^2$-isonormal process are in one-to-one correspondence with standard
real-valued Brownian motions.

\begin{example}
Let $H$ be a real Hilbert space and let $L^2(H):= L^2(0,T;H)$. An $H$-{\em
cylindrical Brownian motion}
is an $L^2(H)$-isonormal process $W$. For any real Banach space $E$, the
associated stochastic integral
$W: L^2(H)\to L^2(\Om)$ extends to an isometry $W$ from $\g(L^2(H),E)$ into
$L^2(\Om;E)$ (see \cite{NeeWei}).
\end{example}

In what follows it will be useful to have an explicit description of the
reproducing kernel Hilbert space (RKHS)
associated with the (centred Gaussian) random variables $W(R)$ associated with
$\g$-radonifying operators $R$:

\begin{proposition}\label{prop:rkhs}
Let $W$ be an $\H$-isonormal process and let $R\in \g(\H,E)$ be a
$\g$-radonifying operator.
The reproducing kernel Hilbert space $\G_R$ associated with $W(R)$ equals the
range of $R$.
\end{proposition}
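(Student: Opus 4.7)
The plan is to unwind the definition of the RKHS of the $E$-valued centred Gaussian random variable $W(R)$. Recall that for any centred Gaussian $X\in L^2(\Om;E)$, the RKHS $\G_X$ equals the image in $E$ of the first chaos $H_X\subseteq L^2(\Om)$ --- defined as the $L^2(\Om)$-closure of the linear span of $\{\lb X,x\s\rb : x\s\in E\s\}$ --- under the Pettis map $\xi\mapsto \E[\xi X]$, endowed with the inner product that turns this map into an isometry. Both the set-theoretic identity $\G_R = R(\H)$ and the correct Hilbert-space structure on $\G_R$ should then follow from the defining identity $\lb W(R),x\s\rb = W(R\s x\s)$ together with the isonormality of $W$.

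First I would identify the first chaos $H_{W(R)}$. Using $\lb W(R),x\s\rb = W(R\s x\s)$ and the isonormality $\E W(h_1)W(h_2) = [h_1,h_2]_\H$, the linear map $W:\H\to L^2(\Om)$ restricts to an isometry of the closed subspace $\overline{R\s(E\s)}^{\H} = (\ker R)^\perp$ of $\H$ onto $H_{W(R)}$. Next I would identify the Pettis map: for $h\in (\ker R)^\perp$ corresponding to $W(h)\in H_{W(R)}$, and for each $x\s\in E\s$,
$$\E[W(h)\lb W(R),x\s\rb] = \E[W(h)W(R\s x\s)] = [h,R\s x\s]_\H = \lb Rh,x\s\rb,$$
so that $\E[W(h)\cdot W(R)] = Rh$ in the Pettis sense. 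Composing with the isometry $W:(\ker R)^\perp\to H_{W(R)}$ and using $R((\ker R)^\perp) = R(\H)$, this identifies $\G_R$ with $R(\H)$ as a set, and the RKHS norm is given by $\n Rh\n_{\G_R} = \n h\n_\H$ for $h\in (\ker R)^\perp$, i.e., the usual quotient Hilbert-space norm on the range of $R$.

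There should be no substantial obstacle here: the proof is a routine unwinding of the definition of the RKHS, using only the identity $\lb W(R),x\s\rb = W(R\s x\s)$ and isonormality. The only point requiring care is to restrict $R$ to $(\ker R)^\perp$ before inverting it, so that the inner product on $\G_R$ inherited from $\H$ through $R$ comes out as the correct RKHS norm.
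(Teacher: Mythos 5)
Your proof is correct and is in substance the same as the paper's: both rest on the single covariance identity $\E \lb W(R),x\s\rb\lb W(R),y\s\rb = [R\s x\s,R\s y\s]_\H$ and conclude that $R$ maps $\overline{\mathrm{ran}(R\s)}^{\H}=(\ker R)^\perp$ isometrically onto $\G_R$ while annihilating its orthogonal complement. The paper packages this through the factorization $Q_R = RR\s = i_Ri_R\s$ of the covariance operator rather than through your first-chaos/Pettis-map description of the RKHS, but these are equivalent unwindings of the same definition.
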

\begin{proof} Let $Q_R\in\calL(E\s,E)$ denote the covariance operator of $W(R)$.
For all $x\s\in E\s$ we have
\begin{equation}\label{fake}
 \lb Q_R x\s,x\s\rb = \E |\lb W(R),x\s\rb|^2 = \E |W(R\s x\s)|^2 = \n R\s
x\s\n_{\H}^2 = \lb RR\s x\s,x\s\rb.
 \end{equation}
Hence $RR\s = Q_R = i_Ri_R\s,$ where $i_R:\G_R\embed E$ is the canonical
embedding.
It follows that $R$ maps ran$(R\s)$ into ran$(i_R) = \G_R$.
As an element of $\G_R$, $RR\s x\s$ equals $i_R\s x\s$, so
$$ \n RR\s x\s \n_{\G_R}^2 = \n i_R\s x\s\n_{\G_R}^2 = \lb Q_R x\s,x\s\rb = \n
R\s x\s\n^2.$$
It follows that $R$ extends to an isometry from $\overline{{\rm
Range}(R\s)}^{\H}$
onto  $\overline{{\rm Range}(i_R\s)}^{\G_R} = \G_R$.
Finally, if $h\perp\overline{{\rm Range}(R\s)}^{\H}$ then $Rh=0$.
\end{proof}

From this point on, we fix a Hilbert space $H$ and a finite time horizon
$0<T<\infty$, and write
$L^2(H):=L^2(0,T;H)$.
We fix a function $\Phi:(0,T)\to \calL(H,E)$ which has the property
that the adjoint orbits $t\mapsto \Phi\s(t)x\s$ belong to $L^2(H)$.
Here, $\Phi\s(t) = (\Phi(t))^* : E\s\to H$ denotes the adjoint of $\Phi(t): H\to
E$ defined
via the Riesz representation theorem.

In order to discuss stochastic integrability of
$\Phi$ with respect to the $\H$-isonormal process $W$, we need to connect the
spaces $\H$ and $L^2(H)$.
This will be done in the next two subsections, where we consider the situations
where we have
continuous dense embeddings $\H\embed L^2(H)$ and $L^2(H)\embed \H$,
respectively. These embeddings can
be interpreted as saying that $W$ is `rougher', respectively `smoother', than
$H$-cylindrical motions.
This rough noise case is slightly subtler to deal with and will therefore be
presented in detail;
the smooth noise case proceeds entirely analogous, with some slight
simplifications.

The basic examples we have in mind are provided by $H$-cylindrical (classical,
Liouville) fractional Brownian motions; see Section 3.

\subsection{The rough noise case}

In this subsection we make the following assumption.

\begin{assumption}\label{ass1}
The space $\H$ is continuously and densely included in $L^2(H)$.
\end{assumption}

We then have continuous and dense embeddings
$\H\embed L^2(H)\embed \H^\star,$
where $\H^\star$ denotes the dual of $\H$ relative to the $L^2(H)$-duality.
Thus, for all $h\in \H$ and $f\in L^2(H)$ we have
$$\lb h, f\rb_{\lb \H,\H^\star\rb} = \int_0^T h(t)f(t)\,dt.$$
For each $h\in\H$ we define the element
 $\phi_h\in \H^\star$ by
$$ \lb g,\phi_h\rb_{\lb \H,\H^\star\rb} = [g,h]_{\H}.$$
By the Riesz representation theorem, the correspondence $h\leftrightarrow
\phi_h$ sets up
a bijective correspondence between $\H$ and $\H^\star$.

For a mapping $S$ from a Banach space $X$ into $ \H$ we denote by $S^\star:
\H^\star \to X\s$ the adjoint, so that
for all $x\in X$ and $h^\star\in \H^\star$ we have
$$\lb x, S^\star h^\star\rb = \lb Sx, h\rb_{\lb \H, \H^\star\rb}.$$

\begin{definition} The function $\Phi:(0,T)\to \calL(H,E)$ is said to be
{\em stochastically integrable} with respect to the $\H$-isonormal
process $W$ if $t\mapsto \Phi\s(t) x\s$
belongs to $\H$ for all $x\s\in E\s$ and
there exists an operator $R_\Phi\in \g(\H,E)$ such that
$R_\Phi\s x\s  = \Phi\s x\s$ for all $x\s\in E\s$.
The random variable $W(R_\Phi)$ is called the {\em stochastic integral} of
$\Phi$ with respect to $W$, notation
$$\int_0^T \Phi\,dW = W(R_\Phi).$$
\end{definition}

Here, $R_\Phi^*: E\s\to \H$ denotes the adjoint of $R_\Phi:\H\to E$ defined
via the Riesz representation theorem.

\begin{proposition}\label{prop:R}
Let Assumption \ref{ass1} hold and suppose $\Phi$ is stochastically integrable
with respect to
$W$. Define the bounded operator $R: \H^\star\to E^{**}$ by
$$ \lb x\s, R h^\star \rb := \lb \Phi\s x\s, h^\star\rb_{\lb \H,\H^\star\rb}.$$
Then  $ R = R_\Phi^{*\star}$, and both operators map $\H^\star $ into $E$.
\end{proposition}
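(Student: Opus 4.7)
The statement has two parts: the identity $R=R_\Phi^{*\star}$ and the fact that the image lies in $E$, not just $E^{**}$. Both reduce to unwinding the definitions together with the Riesz identification.

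For the first part, my plan is to compute both sides against an arbitrary $x\s\in E\s$ and $h^\star\in\H^\star$. By the definition of the $\star$-adjoint applied to $S=R_\Phi\s:E\s\to\H$ we have
\[
 \lb x\s, R_\Phi^{*\star} h^\star\rb
 = \lb R_\Phi\s x\s, h^\star\rb_{\lb\H,\H^\star\rb}.
\]
Since $\Phi$ is stochastically integrable, $R_\Phi\s x\s = \Phi\s x\s$ in $\H$, so the right-hand side equals $\lb \Phi\s x\s, h^\star\rb_{\lb\H,\H^\star\rb}$, which by definition is $\lb x\s, Rh^\star\rb$. This gives $R = R_\Phi^{*\star}$ as operators into $E^{**}$.

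For the second part, the key is that the Riesz correspondence $h\leftrightarrow \phi_h$ is a bijection between $\H$ and $\H^\star$, so an arbitrary $h^\star\in\H^\star$ can be written as $\phi_h$ for a unique $h\in\H$. I would then test $R\phi_h$ against $x\s\in E\s$: using the definition of $R$ and then the defining property of $\phi_h$,
\[
 \lb x\s, R\phi_h\rb
 = \lb \Phi\s x\s, \phi_h\rb_{\lb\H,\H^\star\rb}
 = [\Phi\s x\s, h]_\H
 = [R_\Phi\s x\s, h]_\H
 = \lb x\s, R_\Phi h\rb.
\]
Since this holds for every $x\s\in E\s$, we get $R\phi_h = R_\Phi h$ inside $E^{**}$; but $R_\Phi h\in E$ because $R_\Phi\in\g(\H,E)\subset \calL(\H,E)$. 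Hence $R$ (and equivalently $R_\Phi^{*\star}$) takes values in $E$.

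There is no real obstacle here: the argument is a definition-chase, and the only subtle point is making the identification between $\H^\star$ and $\H$ via Riesz explicit enough to transfer a statement about $R$ on $\H^\star$ into a statement about $R_\Phi$ on $\H$. Once that identification is invoked, the $\gamma$-radonifying property of $R_\Phi$ delivers the values in $E$ automatically.
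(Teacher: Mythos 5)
Your argument is correct and is essentially the paper's own proof: the same definition-chase establishes $R=R_\Phi^{*\star}$ via the $\star$-adjoint identity and the relation $R_\Phi\s x\s=\Phi\s x\s$, and the same use of the Riesz bijection $h\leftrightarrow\phi_h$ reduces the range statement to $R\phi_h=R_\Phi h\in E$ (the paper writes this identity as $R_\Phi h=R_\Phi^{*\star}\phi_h$, which is the same thing). The only cosmetic difference is that the paper first tests the identity against $f\in L^2(H)$ in order to also record the integral formula for $Rf$, whereas you test directly against arbitrary $h^\star\in\H^\star$; both are fine.
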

\begin{proof}
For all $f\in L^2(H)$ we have
\begin{equation}\label{eq:identities}
\begin{aligned}
\ \lb x\s, R_\Phi^{*\star}f\rb
 = \lb R_\Phi\s x\s, f\rb_{\lb\H,\H^\star\rb}
 & = \lb\Phi\s x\s, f\rb_{\lb\H,\H^\star\rb}\\ & = \int_0^T [\Phi\s(t) x\s,
f(t)]_H\,dt = \lb R f,x\s\rb.
\end{aligned}
\end{equation}
This proves that $R = R_\Phi^{*\star}$ as operators from $H^\star$ to $E^{**}$.
To prove that $R$ (and hence also $R_\Phi^{*\star}$) maps $\H $ into $E$ it
suffices to prove that for all
$h\in \H$ we have
\begin{equation}\label{eq:id}
 R_\Phi h =   R_\Phi^{*\star}\phi_h
\end{equation}
in $E^{**}$,
and then to observe that $R_\Phi$ takes values in $E$.
To prove this identity, note that for all $x\s\in E\s$ we have
$$ \lb R_\Phi h, x\s\rb_{\lb E,E\s\rb} = [R_\Phi\s x\s,h]_\H = \lb R_\Phi\s x\s,
\phi_h\rb_{\lb\H,\H^\star\rb} =
\lb x\s, R_\Phi^{*\star}\phi_h\rb_{\lb E\s, E^{**}\rb}.$$
\end{proof}

It follows from the proposition that, under the stated assumptions, $t\mapsto
\Phi(t)f(t)$ is
Pettis integrable and, for all $f\in L^2(H)$,
\begin{equation}\label{eq:Rf}
 Rf = \int_0^T \Phi(t)f(t)\,dt.
\end{equation}

\medskip
Now let $A$ generate a $C_0$-semigroup $(S(t))_{t\ge 0}$ on $E$ and let $B\in
\calL(H,E)$ be a fixed operator.
We are interested in the control problem
\begin{equation}\label{eq:contr}
\begin{aligned}
 x' & = Ax+Bf,\\
 x(0) & = x_0,
\end{aligned}
\end{equation}
where $f$ is a `rough' control, that is, we assume that $f\in\H^\star$.
For controls $f\in L^2(H)$ the mild solution $x$ of the control problem at time
$T$ is given by
$$ x(T) = S(T)x_0 + \int_0^T S(T-t)Bf(t)\,dt.$$
If we assume that $\Phi = S(T-\cdot)B$ satisfies the assumptions of Proposition
\ref{prop:R}, the identity
\eqref{eq:Rf} takes the form
$$ Rf = \int_0^T S(T-t)Bf(t)\,dt.$$
Accordingly, for controls $f\in H^\star$ we define the mild solution of the
problem \eqref{eq:contr}
at time $T$ to be
$$ x(T) = S(T)x_0 + Rf,$$
where $R: \H^\star\to E$ is the map of Proposition \ref{prop:R}.

\begin{theorem}\label{thm:NC1}
Let Assumption \ref{ass1} hold and let $W$ be an $\H$-isonormal process.
Suppose $S(T-\cdot)B$ is stochastically integrable with respect to $W$.
Let $\mathscr{G}_T$ be the RKHS associated with the stochastic integral
$\int_0^T S(T-t)B\,dW(t)$.
The following assertions are equivalent:
\begin{enumerate}
\item[\rm(i)] $S(T)E\subseteq \mathscr{G}_T$;
\item[\rm(ii)] For all $x_0\in E$ the problem
\begin{align*}
\begin{cases}
 x' & = Ax+Bf,\\
 x(0) & = x_0,
\end{cases}
\end{align*}
is null controllable in time $T$ with a control $f\in \H^\star$.
\item[\rm(iii)] For all $x_0\in E$ the laws of the processes solving the
stochastic evolution equation
\begin{align*}
\begin{cases}
 dX(t) & = AX(t) +B\,dW,\\
  X(0) & = x_0,
\end{cases}
\end{align*}
are mutually absolutely continuous.
\end{enumerate}
\end{theorem}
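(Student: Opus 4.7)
The plan is to prove the cycle (ii) $\Leftrightarrow$ (i) $\Leftrightarrow$ (iii), with (i) serving as the intermediary condition that makes both the deterministic reachable set and the Gaussian support structure transparent.

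For (i) $\Leftrightarrow$ (ii): by the definition of mild solution for controls in $\H^\star$ adopted just before the theorem, the value at time $T$ is $x(T) = S(T)x_0 + Rf$, where $R\colon\H^\star\to E$ is the operator of Proposition \ref{prop:R}. Null controllability at time $T$ for every $x_0\in E$ is therefore the statement $S(T)E\subseteq \mathrm{Range}(R)$. Now the key identity \eqref{eq:id}, namely $R_\Phi h = R\phi_h$ for $h\in\H$, combined with the fact that $h\mapsto\phi_h$ is a bijection of $\H$ onto $\H^\star$ (Riesz), implies $\mathrm{Range}(R)=\mathrm{Range}(R_\Phi)$. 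Proposition \ref{prop:rkhs} identifies the latter with $\mathscr{G}_T$, so (i) and (ii) coincide.

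For (i) $\Leftrightarrow$ (iii): the mild solution of the SPDE at time $T$ is $X(T) = S(T)x_0 + W(R_\Phi)$, so its law is the Gaussian measure $N(S(T)x_0, Q_T)$ with covariance $Q_T = R_\Phi R_\Phi^*$ (see \eqref{fake}), which is independent of $x_0$. Two Gaussian measures on $E$ with the same covariance are either equivalent or mutually singular (Feldman--H\'ajek), and the equivalent case occurs if and only if the difference of means lies in the reproducing kernel Hilbert space of the common covariance (Cameron--Martin). By Proposition \ref{prop:rkhs} this RKHS is exactly $\mathscr{G}_T$. Requiring equivalence for every pair $x_0^{(1)}, x_0^{(2)}\in E$ thus amounts to $S(T)(x_0^{(1)}-x_0^{(2)})\in\mathscr{G}_T$ for all such pairs, i.e. $S(T)E\subseteq\mathscr{G}_T$.

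The main obstacle I expect is not the probabilistic input, which is standard once the RKHS has been identified, but the bookkeeping around the two different dual pairings used in Section \ref{sec:2}: the Hilbert adjoint $R_\Phi^*\colon E^*\to\H$ and the $\langle\H,\H^\star\rangle$-adjoint $R_\Phi^{*\star}\colon\H^\star\to E^{**}$. Proposition \ref{prop:R} already does the heavy lifting by showing $R=R_\Phi^{*\star}$ takes values in $E$ and by establishing \eqref{eq:id}; the crucial equality $\mathrm{Range}(R)=\mathrm{Range}(R_\Phi)$ is then essentially a one-line consequence. A minor secondary point is that the operator $R$ is a priori defined on the larger space $\H^\star$, which might suggest a larger reachable set than the one obtained from controls in $\H$; the identity \eqref{eq:id} shows that this apparent gain is illusory, and this is precisely what makes $\H^\star$ the \emph{right} control space for matching the stochastic RKHS.
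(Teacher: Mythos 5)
Your proposal is correct and follows essentially the same route as the paper: the equivalence (i) $\Leftrightarrow$ (ii) rests on the identity $R\phi_h = R_\Phi h$ from Proposition \ref{prop:R} together with the Riesz bijection $h\leftrightarrow\phi_h$ and the identification of $\mathrm{Range}(R_\Phi)$ with $\mathscr{G}_T$ from Proposition \ref{prop:rkhs}, while (i) $\Leftrightarrow$ (iii) is the Feldman--H\'ajek/Cameron--Martin dichotomy for Gaussian measures sharing the covariance $Q_T=R_\Phi R_\Phi^*$. Your write-up merely spells out the Gaussian step that the paper states in one line, so there is nothing to add.
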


\begin{proof}
(i) $\Leftrightarrow$ (ii):
The problem in (ii) is null controllable in time $T$
with control $f\in \H^\star$ if and only if $Rf =-S(T)x_0.$
Now, $f = \phi_h$ for some $h\in \H$ and
$$R f= R\phi_h = R_\Phi^{*\star}\phi_h = R_\Phi h.$$
Furthermore, the range of $R_\Phi$ equals the RKHS $\mathscr{G}_T$ by
Proposition \ref{prop:rkhs}.
Combining things, we see that the problem in (ii) is null
controllable in time $T$ if and only $S(T)x_0 \in \mathscr{G}_T$.

(i) $\Leftrightarrow$ (iii): This is a special case of the Feldman-Hajek theorem
on equivalent of Gaussian measures.
\end{proof}

\subsection{The smooth noise case}

Next we consider the following dual assumption.

\begin{assumption}\label{ass2}
The space $L^2(H)$ is
is continuously and densely included in $\H$.
\end{assumption}

We then have continuous and dense embeddings
$\H^\star\embed L^2(H)\embed \H$.

\begin{definition} The function $\Phi:(0,T)\to \calL(H,E)$ is said to be {\em
stochastically integrable} with respect to the $\H$-isonormal
process $W$ if
there exists an operator $R_\Phi\in \g(\H,E)$ such that
$R_\Phi\s x\s  = \Phi\s x\s$ for all $x\s\in E\s$.
The random variable $W(R_\Phi)$ is called the {\em stochastic integral} of
$\Phi$ with respect to $W$, notation
$$\int_0^T \Phi\,dW = W(R_\Phi).$$
\end{definition}

Note that, since we are assuming that the dual orbits $t\mapsto \Phi\s(t) x\s$
belong to $L^2(H)$, they automatically define elements of $\H$ (unlike in the
rough noise case).

\begin{proposition}\label{prop:R-dual}
Let Assumption \ref{ass2} hold and suppose $\Phi$ is stochastically integrable
with respect to
$W$. Define the bounded operator $R: \H^\star\to E^{**}$ by
$$ \lb x\s, R h^\star \rb := \lb \Phi\s x\s, h^\star\rb_{\lb \H,\H^\star\rb}.$$
Then  $ R = R_\Phi^{*\star}$, and both operators map $\H^\star $ into $E$.
\end{proposition}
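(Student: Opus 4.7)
The plan is to run the proof of Proposition \ref{prop:R} essentially verbatim, absorbing two small shifts caused by Assumption \ref{ass2}. First, because $L^2(H)\embed \H$, the dual orbits $t\mapsto \Phi\s(t)x\s$, which by standing assumption lie in $L^2(H)$, automatically belong to $\H$; so stochastic integrability no longer needs to be supplemented by a separate membership hypothesis (as already reflected in the definition preceding the statement). Second, in identifying $R=R_\Phi^{*\star}$ one tests against $h^\star\in\H^\star$ rather than against $f\in L^2(H)$. Unwinding the $\star$-adjoint and using $R_\Phi\s x\s=\Phi\s x\s$ in $\H$, the chain
$$\lb x\s, R_\Phi^{*\star}h^\star\rb = \lb R_\Phi\s x\s, h^\star\rb_{\lb\H,\H^\star\rb} = \lb \Phi\s x\s, h^\star\rb_{\lb\H,\H^\star\rb} = \lb x\s, R h^\star\rb$$
is the smooth-noise analogue of \eqref{eq:identities} and gives $R=R_\Phi^{*\star}$ as operators $\H^\star\to E^{**}$.

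To upgrade this from ``values in $E^{**}$'' to ``values in $E$'', I would reintroduce the Riesz correspondence $h\leftrightarrow\phi_h$ between $\H$ and $\H^\star$, defined by $\lb g,\phi_h\rb_{\lb\H,\H^\star\rb}=[g,h]_\H$. Under Assumption \ref{ass2} this still sets up a bijection, because the Gelfand-triple pairing identifies $\H^\star$ with the Hilbert dual of $\H$. Given $h^\star\in\H^\star$, write $h^\star=\phi_h$ for a unique $h\in\H$ and pair with $x\s\in E\s$:
$$\lb x\s, R_\Phi^{*\star}\phi_h\rb = \lb R_\Phi\s x\s,\phi_h\rb_{\lb\H,\H^\star\rb} = [R_\Phi\s x\s,h]_\H = \lb R_\Phi h, x\s\rb,$$
so $R_\Phi^{*\star}\phi_h=R_\Phi h$ in $E^{**}$, which is just \eqref{eq:id}. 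Since $R_\Phi h\in E$, this forces $R_\Phi^{*\star}\H^\star\subseteq E$, and via the first step the same holds for $R$.

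The argument is structurally identical to that of Proposition \ref{prop:R}, so I do not foresee any real obstacle; the only point requiring care is to keep straight which duality is in play (Banach adjoint, Hilbert adjoint via Riesz, or the $L^2(H)$-pairing between $\H$ and $\H^\star$) at each step, which is a bookkeeping exercise rather than a conceptual one. A secondary sanity check is that the Pettis-integral identity analogous to \eqref{eq:Rf} continues to hold here for $f\in L^2(H)\subseteq\H$, obtained by the same computation with $h=f$; this is not needed for the statement but is reassuring for subsequent use.
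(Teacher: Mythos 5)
Your proposal is correct and follows essentially the same route as the paper, which simply states that the proof of Proposition \ref{prop:R} carries over verbatim with the identities \eqref{eq:identities} now tested against elements of $\H^\star$ rather than all of $L^2(H)$. Your explicit unwinding of the duality chain and the Riesz correspondence $h\leftrightarrow\phi_h$ is exactly the bookkeeping the paper leaves implicit.
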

\begin{proof}
The proof follows the lines of that of Proposition \ref{prop:R} {\em verbatim},
except that the identities
\eqref{eq:identities} hold only for elements $f\in \H^\star$.
\end{proof}

\medskip
Now let $A$ generate a $C_0$-semigroup $(S(t))_{t\ge 0}$ on $E$ and let $B\in
\calL(H,E)$ be a fixed operator.
In the present setting, no ambiguities with regard to the definition of a mild
solution for the
control problem arise and we have the following result.

\begin{theorem}\label{thm:NC2}
Let Assumption \ref{ass1} hold and let $W$ be an $\H$-isonormal process.
Suppose $S(T-\cdot)B$ is stochastically integrable with respect to $W$.
Let $\mathscr{G}_T$ be the RKHS associated with the stochastic integral
$\int_0^T S(T-t)B\,dW(t)$.
The following assertions are equivalent:
\begin{enumerate}
\item[\rm(i)] $S(T)E\subseteq \mathscr{G}_T$;
\item[\rm(ii)] For all $x_0\in E$ the problem
$$
\begin{aligned}
 x' & = Ax+Bf,\\
  x(0) & = x_0,
\end{aligned}
$$
is null controllable in time $T$ with a control $f\in \H^\star$.
\item[\rm(iii)] For all $x_0\in E$ the laws of the processes solving the
stochastic evolution equation
$$
\begin{aligned}
 dX(t) & = AX(t) +B\,dW,\\
  X(0) & = x_0,
\end{aligned}
$$
are mutually equivalent.
\end{enumerate}
\end{theorem}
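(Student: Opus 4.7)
The plan is to follow the proof of Theorem \ref{thm:NC1} essentially verbatim, taking advantage of the simplifications inherent to the smooth noise setting. Since $\H^\star\embed L^2(H)\embed \H$, any control $f\in\H^\star$ is in particular an $L^2(H)$-function, so the mild solution admits its classical Bochner-integral representation without any reinterpretation, and there is no need to extend $R_\Phi^{*\star}$ from $\H^\star$ to $E$ through a separate $E^{**}$-argument as was needed in the rough case.

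For the equivalence (i) $\Leftrightarrow$ (ii), I would note that for $f\in \H^\star$ the mild solution at time $T$ equals $x(T)=S(T)x_0+\int_0^T S(T-t)Bf(t)\,dt = S(T)x_0 + Rf$, with $\Phi = S(T-\cdot)B$, the second equality being Proposition \ref{prop:R-dual}. Null controllability with a control in $\H^\star$ thus reduces to solvability of $Rf=-S(T)x_0$. Writing $f=\phi_h$ via the Riesz bijection $h\leftrightarrow\phi_h$ between $\H$ and $\H^\star$, and invoking the identity $R\phi_h = R_\Phi^{*\star}\phi_h = R_\Phi h$ (whose proof is a verbatim repetition of the computation in Proposition \ref{prop:R}), solvability in $f\in\H^\star$ becomes solvability of $R_\Phi h = -S(T)x_0$ for some $h\in\H$. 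Combined with Proposition \ref{prop:rkhs}, which identifies the range of $R_\Phi$ with $\mathscr{G}_T$, this yields (i) $\Leftrightarrow$ (ii).

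For the equivalence (i) $\Leftrightarrow$ (iii), I would appeal, exactly as in Theorem \ref{thm:NC1}, to the Feldman--Hajek dichotomy for Gaussian measures: the laws of the mild solutions starting from $x_0$ and $x_1$ are Gaussian, have identical covariance structure, and their means differ by $S(\cdot)(x_0-x_1)$, so mutual equivalence for every pair $x_0,x_1\in E$ reduces to the endpoint condition $S(T)E\subseteq \mathscr{G}_T$. I do not anticipate any substantive obstacle: every step of the rough-case argument carries over once the test functions are restricted from $L^2(H)$ to $\H^\star$, and precisely this restriction has already been absorbed into Proposition \ref{prop:R-dual}.
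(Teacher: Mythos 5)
Your proposal is correct and coincides with the paper's own treatment: the paper's proof of Theorem \ref{thm:NC2} consists of the single sentence that it follows the proof of Theorem \ref{thm:NC1} \emph{verbatim}, and your argument is exactly that verbatim adaptation, with Proposition \ref{prop:R-dual} replacing Proposition \ref{prop:R} and the Feldman--Hajek dichotomy handling (i) $\Leftrightarrow$ (iii). Your additional observation that the smooth-noise embedding $\H^\star\embed L^2(H)$ removes the need for the $E^{**}$-detour is a helpful (and accurate) elaboration of why the adaptation is painless.
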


\begin{proof}
The proof follows the lines of that of Theorem \ref{thm:NC1} {\em verbatim}.
\end{proof}

\section{Fractional Ornstein-Uhlenbeck processes}\label{sec:3}

\subsection{Fractional Brownian motion}
In the present section the results of the preceding part are applied to the
fractional Ornstein-Uhlenbeck process, i.e. to the linear stochastic evolution
equation in which the driving process is a classical space-dependent fractional
Brownian motion (fBm)
in time and white or, possibly, correlated in space. At first we recall standard
definitions of these objects and explain how they may be understood in the
framework developed above.

We begin with the case of a scalar-valued fBm $B^\beta = (B^\beta(t))_{t\in
[0,T]}$ with Hurst
parameter $\beta\in (0,1)$.
Following the approach taken in \cite{BrzNeeSal} we
identify $B^\b$ with an $\H_\beta$-isonormal process $W^\b$, the inner product
of the
real Hilbert space $\H_\beta$ being given, for
step functions $\varphi_1, \varphi_2:[0,T]\to \R$, by
\begin{equation}
[\varphi_1,\varphi_2]_{\H_\b} = \mathbb{E}W^\beta(\varphi_1)W^\beta(\varphi_2)=
\mathbb{E}\int_0^T \!\varphi_1 \,dB^\beta\int_0^T\!\varphi_2\,dB^\beta
=[\mathscr{K}_\b^*\varphi_1,\mathscr{K}_\b^*\varphi_2]_{L^2(H)}.
\end{equation}
Here, the operator $\mathscr{K}_\b^*: \H_\b\to L^2(H)$ is defined, for step
functions
$\varphi:[0,T]\to\R$, by
\begin{equation}\label{eq:2.6}
\mathscr{K}_\b^* \varphi(t) = \varphi (t) K_\b(T,t) + \int_t^T \left(
\varphi(s) - \varphi(t) \right) \frac{\partial K_\b}{\partial s}
(s,t) \, ds,
\end{equation}
where $K_\b$ is the real-valued kernel
\begin{equation}\label{eq:2.1}
K_\b(t,s) = \frac{\tilde{c}_\beta(t-s)^{\beta-\frac{1}{2}}}{\Gamma\left(\beta +
\frac{1}{2}\right)} + \frac{\tilde{c}_\beta\left( \frac{1}{2} - \beta
\right)}{\Gamma\left(\beta + \frac{1}{2}\right)} \int_s^t \left( u-s
\right)^{\beta-\frac{3}{2}} \Big( 1-
\Big(\frac{s}{u}\Big)^{\frac{1}{2}-\beta}\Big) \, du,
\end{equation}
$\tilde{c}_\beta$ being a constant depending only on $\beta$. We
conclude that $\H_\b$ is the completion of the linear space of
step functions with respect to the norm
\begin{equation}\label{norm}\n \varphi \n _{\H_\b}
:= \n\mathscr{K}_\b^* \varphi \n_{L^2(H)}.\end{equation}
To give a more specific description
of this space it is convenient to distinguish two cases, corresponding to the
``rough'' and
``smooth'' noise cases considered above.

For $0<\beta<\frac12$ (the rough noise case) we have
\begin{equation}\label{eq:X2}
(\mathscr K_\b^* \varphi) (t) = c_\beta t^{\frac12-\beta} D^{\frac12
-\beta}_{T_-}
(u_{\beta-\frac12}\varphi) (t),\quad \varphi \in \H_\b ,
\end{equation}
where $u_{\alpha}(s) = s^{\alpha}$ and $D^{\alpha}_{T_-}$ is the right-sided
fractional derivative
\begin{equation}
\left(D_{T-}^\alpha \varphi\right) (t) = \frac{1}{\Gamma(1-\alpha)}
\Big( \frac{\varphi(t)}{(T-t)^\alpha} + \alpha \int_t^T
\frac{\varphi(s) -\varphi(t)}{(s-t)^{\alpha+1}}\, ds \Big)
\end{equation}
and $c_\beta$ is a constant depending only on $\b$. It is not difficult to see
that the
space $\H_\b$,  as a set, may be identified as
 \begin{equation}
 \H_\b  = I_{T_-}^{\frac12-\beta}(L^2(0,T))
 \end{equation}
(cf. \cite[Proposition 6]{AMN}).

For $\frac12<\beta<1$ (the smooth noise case) we have
\begin{equation}\label{eq:X3}
(\mathscr K_\b^*\varphi) (t) = c_\beta t^{\frac12-\beta} I^{\beta-\frac12}_{T_-}
(u_{\beta-\frac12}\varphi)(t),\quad \varphi \in \H_\b ,
\end{equation}
where $I^{\alpha}_{T_-}$ is the right-sided Riemann-Liouville integral,
\[
(I^{\alpha}_{T_-}\varphi) (t) = \frac1{\Gamma(\alpha)} \int^T_t (s-t)^{\alpha-1}
\varphi (s)\, ds.
\]
In this case we have a continuous dense embedding
$L^2(H)\embed \H_\b$ and the operator
${\mathscr K}_\b^*$
restricts to a bounded operator on $L^2(H)$ (cf. \cite{PT}). Moreover,
we have continuous dense embeddings
$$L^{\frac1{\beta}} (0,T,H)\embed\overline{\Theta}_\beta\embed \H_\b,$$ where
$\overline{\Theta}_\beta$ consists of those
elements $\psi$ from $L^1(0,T,H)$ such that
\[
\|\psi\|^2_{\overline{\Theta}_\beta} = \int^T_0\int^T_0|\psi(s)| \cdot |\psi(r)|
 \phi_\b (r-s) \,dr\,ds <\infty
\]
with $\phi_\b(r) = (2\b-1)\beta|r|^{2\beta-2}$.

\subsection{Cylindrical fractional Brownian motion}
Let $H$ be a real Hilbert space. An {\em $H$-cylindrical fBm} with Hurst
parameter $\b$ is defined as a
an $\H_\b\overline{\otimes}H$-isonormal process. Here,
$\H_\b\overline{\otimes}H$ denotes the
Hilbert space tensor product of $\H_\b$ and $H$. Under the identification made
at the beginning
of the previous subsection, an $\R$-cylindrical fBm is just a classical
scalar-valued
fBm (with the same Hurst parameter).

If $H$ has an orthonormal basis $(h_n)_{n\ge 1}$, one may think of an
$H$-cylindrical
fBm $B^\b$ as a formal series
\begin{equation}\label{eq:X1}
W^\beta_t = \sum^{\infty}_{n=1} (B^\beta_n)_t h_n, \quad t\in \mathbb R_+,
\end{equation}
where $(B_n^\beta)_{n\ge 1}$ is a sequence of independent fBm's with Hurst
parameter $\beta$.

\subsection{Ornstein-Uhlenbeck processes associated with a cylindrical fBm}
Let $H$ be a real Hilbert space, $E$ a real Banach space, and let $W^\b$ an
$H$-cylindrical fBm.
We consider the equation
\begin{equation}\label{eq:X77}
\begin{aligned}
dZ_t^x &= AZ_t^x\, dt + B \, dW^\beta_t, \\
Z_0^x &= x,
\end{aligned}
\end{equation}
where $A$ generates a strongly
continuous semigroup $S = (S(t))_{t\ge 0}$ on $E$,
the operator $B$ is bounded and linear from $H$ into $E$, and the initial datum
$x$ belongs to $E$.
The solution is
understood in the mild sense, i.e.
\begin{equation}\label{SDE}
Z_t^x = S(t) x + \int^t_0 S(t-r) B \, d W^\beta_r = : S(t) x + Z_t, \ t\in
[0,T],
\end{equation}
provided the stochastic integral is well-defined. For Liouville cylindrical fBm,
a necessary and sufficient condition for this is
given in \cite{BrzNeeSal}; for $0<\beta<\frac12$ the same condition works for
(classical)
cylindrical fBm.

Let $\g(H,E)$ denote the space of $\gamma$-radonifying operators from $H$ into
$E$.
For later reference we recall that if $E$ is a Hilbert space, then $\g(H,E) =
{\mathscr{L}}_2(H,E)$,
the space of Hilbert-Schmidt operators from $H$ to $E$, with equals norms.

A standard sufficient condition for existence and regularity of the so-called
Ornstein-Uhlenbeck process $(Z_t)_{t\in [0,T]}$ is recalled in the following
proposition.
We shall need it only in the case that $E$ is a Hilbert space and refer to
\cite{DMP1, DMP2} for a proof for the case $\theta=0$. For reasons of
completeness we shall
include a proof, which is an adaptation of the argument in
\cite[Theorem 5.5]{BrzNeeSal} (where more details are provided).

If the semigroup $S$ is analytic we may find  $z_0 >0$ sufficiently
large such that the fractional powers of $z_0-A$ exist ($z_0$ is fixed in the
sequel)
and and we denote by $E^{\theta}$ is the domain of the fractional
power $(z_0-A)^{\theta}$ equipped with the graph norm.

\begin{proposition}\label{XE}
Suppose $S$ is a strongly continuous analytic semigroup on the real Hilbert
space
$E$, let $B\in {\mathscr {L}}(H,E)$ be bounded,
and assume that
for some $\theta\in [0,1)$ and $\lambda \ge 0$ we have
\begin{equation}\label{eq:X8}
\n S(t)B\n_{{\mathscr{L}}_2(H,E^\theta)}  \ \le ct^{-\lambda}, \ t\in(0,T],
\end{equation}
for some $c\ge 0$.
If $$ \d+\theta +\lambda < \b,$$
the stochastic convolution process $(Z_t)_{t\in [0,T]}$ defined by \eqref{SDE}
is well-defined and
has a modification with paths in $C^{\delta}([0,T];E^\theta)$.
\end{proposition}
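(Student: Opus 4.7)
The proof will adapt the scheme of \cite[Theorem 5.5]{BrzNeeSal}, with the kernel $\mathscr{K}_\b^*$ of the classical fBm (rather than the Liouville fBm) in the rôle of the isometry relating $\H_\b$ to $L^2(H)$. First I would establish well-definedness of the stochastic convolution by verifying that, for each fixed $t\in(0,T]$, the function $\Phi_t(r) := \one_{[0,t]}(r)S(t-r)B$ belongs to $\g(\H_\b, E^\theta)$; since $E^\theta$ is a Hilbert space, this coincides with $\mathscr{L}_2(\H_\b, E^\theta)$. The key step is the identity
$$\n \Phi_t\n_{\g(\H_\b,E^\theta)} = \n \mathscr{K}_\b^* \Phi_t\n_{L^2(0,T;\mathscr{L}_2(H,E^\theta))},$$
which reduces the $\g$-radonifying norm to an ordinary $\mathscr{L}_2$ norm after applying the explicit representations \eqref{eq:X2} (rough case) or \eqref{eq:X3} (smooth case). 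Inserting the pointwise estimate \eqref{eq:X8} then yields convergence of the resulting integral provided $\lambda<\b$.

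For the Hölder regularity in $E^\theta$, I would invoke the Kolmogorov continuity theorem. Since $(Z_t)$ is Gaussian, every $L^p$-moment is controlled by the $L^2$-moment, so it suffices to prove
$$ \E \n Z_t - Z_s\n_{E^\theta}^2 \leq C\,|t-s|^{2(\b-\lambda-\theta)} $$
for $0\leq s<t\leq T$, after which Kolmogorov delivers a modification with paths in $C^{\d'}([0,T];E^\theta)$ for every $\d'<\b-\lambda-\theta$, in particular for the given $\d$. To prove this bound I would split
$$ Z_t - Z_s = \int_s^t S(t-r)B\,dW_r^\b + \int_0^s (S(t-s)-I)S(s-r)B\,dW_r^\b. $$
The first summand is controlled directly by the $\g(\H_\b,E^\theta)$-norm of $\one_{[s,t]}S(t-\cdot)B$, which by \eqref{eq:X8} scales like $(t-s)^{\b-\lambda}$ after a change of variables in the kernel estimate. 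For the second summand, analyticity of $S$ gives $\n(S(t-s)-I)x\n_{E^\theta}\leq C(t-s)^{\b-\lambda-\theta}\n x\n_{E^{\b-\lambda}}$ (for the appropriate fractional-power comparison), so the extra factor $(t-s)^{\b-\lambda-\theta}$ comes out of the integral, leaving a finite convolution bound.

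The main obstacle is the rough-noise regime $\b<\tfrac12$, where $\mathscr{K}_\b^*$ involves the right-sided fractional derivative $D^{\frac12-\b}_{T-}$ and the factor $u_{\b-\frac12}$, producing a nonlocal singular integral that must be estimated on the vector-valued integrand $r\mapsto r^{\b-\frac12}S(t-r)B$. One has to track how the singularity of $\Phi_t$ at $r=t$ interacts with the endpoint singularity at $r=T$ in $D^{\frac12-\b}_{T-}$; this is where the condition $\lambda<\b$ (and the gain of $(t-s)^{\b-\lambda}$ in the difference estimate) is extracted. The smooth case $\b>\tfrac12$ is considerably easier: one can either use the continuous embedding $L^{1/\b}(0,T;H)\embed \H_\b$ or the double-integral representation with the kernel $\phi_\b(r-s)=\b(2\b-1)|r-s|^{2\b-2}$, and then the estimate reduces to an elementary application of Hardy–Littlewood–Sobolev to the function $r\mapsto \n S(t-r)B\n_{\mathscr{L}_2(H,E^\theta)}\leq c(t-r)^{-\lambda}$.
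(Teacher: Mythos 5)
Your skeleton coincides with the paper's: the same splitting of $Z_t-Z_s$ into $\int_s^t S(t-r)B\,dW^\b_r$ and $\int_0^s[S(t-r)-S(s-r)]B\,dW^\b_r$, the same appeal to Gaussianity (Kahane--Khintchine) to upgrade second moments, and the same Kolmogorov--Chentsov finish. The divergence is in how the two second moments are estimated, and that is exactly where your proposal stops short. The paper never applies $\mathscr K_\b^*$ to the operator-valued integrand at all: it fixes $\tau$ with $\l<\tau<\b-\d-\theta$, notes that \eqref{eq:X8} makes $(z_0-A)^{-\tau}B$ a \emph{fixed} Hilbert--Schmidt operator into $E^\theta$, and factors each integrand as
\begin{equation}
\bigl[(s-r)^{\d+\tau+\theta+\e}(z_0-A)^{\d+\tau+\theta}S(s-r)\bigr]\cdot (s-r)^{-\d-\tau-\theta-\e}\cdot\bigl[(S(t-s)-I)(z_0-A)^{-\d-\tau}B\bigr],
\end{equation}
i.e.\ a uniformly bounded analytic-semigroup factor, times a \emph{scalar} power function, times a fixed Hilbert--Schmidt operator. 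Everything then reduces to the elementary scalar fact $\E\bigl|\int_0^s(s-r)^{-\a}\,dW^\b(r)\bigr|^2\eqsim s^{2(\b-\a)}$ for $\a<\b$, which holds uniformly in $\b\in(0,1)$, so no case distinction between rough and smooth noise ever arises. Your route instead computes $\n\mathscr K_\b^*\Phi_t\n_{L^2(0,T;\mathscr{L}_2(H,E^\theta))}$ directly on the operator-valued function; for $\b<\tfrac12$ that is precisely the nonlocal singular-integral estimate you yourself label ``the main obstacle'' and do not carry out. As written, therefore, the proposal does not close the rough-noise case, which is the harder half of the proposition; the missing idea is the scalar reduction via $(z_0-A)^{-\tau}B$.

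Two smaller points. First, the endpoint bound $\E\n Z_t-Z_s\n_{E^\theta}^2\le C|t-s|^{2(\b-\l-\theta)}$ you aim for is not in general attainable: pulling $(S(t-s)-I)(z_0-A)^{-(\b-\l-\theta)}$ out of the second integral leaves a stochastic integral whose integrand has, after the analyticity estimate, the singularity $(s-r)^{-(\b-\theta)}$, which for $\theta=0$ sits exactly at the non-integrable exponent $\b$. One must concede an $\e>0$, as the paper does through its choice $\l<\tau$ and $\e>0$; this is harmless since only $\d<\b-\l-\theta$ is claimed, but your statement should be weakened accordingly. Second, your Hardy--Littlewood--Sobolev remark for $\b>\tfrac12$ is fine and is essentially the embedding $L^{1/\b}(0,T;H)\embed\H_\b$ already recorded in the paper, but it gives you nothing in the regime $\b<\tfrac12$ where that embedding fails.
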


\begin{proof}
Fix  $0\le s\le t\le T$. By the triangle inequality in $L^2(\Om;E)$,
\begin{equation}\label{terms}
\begin{aligned}
\big(\E \n Z(t)-Z(s)\n_{E^\theta}^2\big)^\frac12
 & \le \Big(\E \Big\n\int_0^s [S(t-r)-S(s-r)]B\,dW^\b(r)
\Big\n_{E^\theta}^2\Big)^\frac12
\\ & \qquad\qquad\quad\  + \Big(\E \Big\n \int_s^t S(t-r)B\,dW^\b(r)
\Big\n_{E^\theta}^2\Big)^\frac12.
\end{aligned}
\end{equation}
We shall estimate both terms separately.

Fix $\tau\ge 0$ such that
$$ \lambda < \tau < \beta-\delta - \theta.$$
Then (with generic constants $c$)
$$ \n (z_0-A)^{-\tau}B\n_{{\mathscr{L}}_2(H,E^\theta)} \le c \int_0^\infty
t^{\tau-1}\n S(t)B\n_{{\mathscr{L}}_2(H,E^\theta)}\,dt
\le c \int_0^\infty t^{\tau-1-\lambda}\,dr <\infty. $$

For the first term in \eqref{terms} we have, for any
$\e>0$ such that $\delta+\tau+\theta+\e<\b$,
\begin{align*}
\ & \E \Big\n \int_0^s [S(t-r)-S(s-r)]B\,dW^\b(r) \Big\n_{E^\theta}^2
\\ & \simeq \E \Big\n \int_0^s
(s-r)^{\delta+\tau+\theta+\e}(z_0-A)^{\delta+\tau+\theta} S(s-r)
\\ & \hskip3cm \times
(s-r)^{-\delta-\tau-\theta-\e} [S(t-s)-I](z_0-A)^{-\delta-\tau}B\,dW^\beta(r)
\Big\n^2
\\ & {\le} c^2 \E \Big\n \int_0^s (s-r)^{-\delta-\tau-\theta-\e}
[S(t-s)-I](z_0-A)^{-\delta-\tau}
B\,dW^\b(r) \Big\n^2
\\ & {=} c^2 \n [S(t-s)-I](z_0-A)^{-\delta-\tau} B\n_{{\mathscr{L}}_2(H,E)}^2
\E\Big|\int_0^s
(s-r)^{-\delta-\tau-\theta-\e}\,dW^\b (r)\big|^2
\\ & {=}  c^2 s^{2\b-2\delta-2\tau-2\theta-2\e} \n
[S(t-s)-I](z_0-A)^{-\delta}(z_0-A)^{-\tau}B\n_{{\mathscr{L}}_2(H,E)}^2
\\ & {\le} c^2(t-s)^{2\delta}\n(z_0-A)^{-\tau} B\n_{{\mathscr{L}}_2(H,E)}^2,
\end{align*}
where the numerical value of $c$ changes from line to line (and is allowed to
depend on $T$).

Similarly,
$$
\begin{aligned}
\ &\E \Big\n \int_s^t S(t-r)B\,dW^\b(r) \Big\n_{E^\theta}^2
\\ & \quad \eqsim \E \Big\n \int_s^t (t-r)^{\theta+\tau+\e}
(z_0-A)^{\theta+\tau}S(t-r)
(t-r)^{-\theta-\tau-\e}(z_0-A)^{-\tau}B\,dW^\b(r)
\Big\n^2
\\ & \quad \le c^2 \E \Big\n \int_s^t (t-r)^{-\tau-\theta-\e}(z_0-A)^{-\tau}
B\,dW^\b(r)\Big\n^2
\\ & \quad = c^2 \n (z_0-A)^{-\tau}B\n_{{\mathscr{L}}_2(H,E)}^2 \E\Big|\int_s^t
(t-r)^{-\tau-\theta-\e}\,dW^\b(r)\Big|^2
\\ & \quad \le c^2 \n (z_0-A)^{-\tau} B\n_{{\mathscr{L}}_2(H,E)}^2
(t-s)^{2\beta-2\tau-2\theta-2\e}
\\ & \quad \le c^2 \n (z_0-A)^{-\tau} B\n_{{\mathscr{L}}_2(H,E)}^2
(t-s)^{2\delta}.
\end{aligned}
$$
Combining these estimates, this gives
$$\E \n Z(t)-Z(s)\n_{E_\theta}^2 \le c ^2 \n (z_0-A)^{-\tau}
B\n_{{\mathscr{L}}_2(H,E)}^2 (t-s)^{2\delta}
.$$

The assertion now follows from a routine application of the
Kahane-Khintchine inequalities (to pass from moments of order $2$ to moments of
order $p$) and the Kolmogorov-Chentsov continuity theorem.
\end{proof}

Now we are ready to formulate the main result for the fractional
Ornstein-Uhlenbeck process.
\begin{theorem}\label{thm:Y1}
Suppose $A$ generates a strongly continuous analytic semigroup $S$ on $E$.
Suppose furthermore that $B\in {\mathscr {L}}(H,E)$ is bounded and injective.
Assume:
\begin{itemize}
\item[\rm(i)] $\n S(t)B\n_{\mathscr \g(H,E)} \le ct^{-\lambda},\ {t\in (0,1)}$,
for some $\lambda \in [0,\beta)$,
\item[\rm(ii)]
$
\mbox{\rm Range} \, (B) \supset\mbox{\rm
Dom}((z_0-A)^{\mu})
$
for some $\mu \in [0,1-\beta )$.

\end{itemize}
Then there exists a continuous mild solution
$Z^x $ to the equation \eqref{eq:X77}, and for each $T>0$ the probability laws
of $Z^x_T$, $x\in E,$ are equivalent.
\end{theorem}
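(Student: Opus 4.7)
For existence, apply Proposition \ref{XE} with $\theta = 0$: assumption (i) supplies the bound $\|S(t)B\|_{\mathscr{L}_2(H,E)} \le c t^{-\lambda}$ (recall $\gamma(H,E)=\mathscr{L}_2(H,E)$ since $E$ is Hilbert), and any $\delta \in (0, \beta-\lambda)$ satisfies $\delta + \theta + \lambda < \beta$, producing a modification of $Z^x$ with paths in $C^\delta([0,T];E)$. The same bound also yields stochastic integrability of $\Phi := S(T-\cdot)B$ with respect to $W^\beta$, so I would invoke Theorem \ref{thm:NC1} when $\beta < \tfrac12$ and Theorem \ref{thm:NC2} when $\beta > \tfrac12$ (the borderline case $\beta = \tfrac12$ being the classical result of \cite{DaZ,Za}). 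Each reduces the equivalence of laws to finding, for every $x_0 \in E$, a control $f \in \H_\beta^\star$ driving the deterministic problem $x' = Ax + Bf$, $x(0) = x_0$, to $x(T) = 0$.

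To construct such a control I would use assumption (ii): injectivity of $B$ together with the closed graph theorem make $B^{-1}:\mathrm{Dom}((z_0-A)^\mu)\to H$ a bounded operator (graph norm on the domain), while the analyticity estimate $\|(z_0-A)^\mu S(t)\|_{\mathscr{L}(E)}\le c t^{-\mu}$ then shows that
$$
f(t) := -\frac{1}{T}\, B^{-1} S(t) x_0
$$
is well-defined with $\|f(t)\|_H \le c\, t^{-\mu}\|x_0\|_E$ for $t \in (0,T]$. The semigroup identity $S(T-t)S(t) = S(T)$ gives immediately $\int_0^T S(T-t) B f(t)\,dt = -S(T)x_0$, so $f$ is a null control as soon as it lies in $\H_\beta^\star$.

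The main obstacle is precisely this last point: verifying $f \in \H_\beta^\star$. Using the description of $\mathscr{K}_\beta^*$ from Section \ref{sec:3} as a weighted right-sided fractional derivative of order $\tfrac12-\beta$ when $\beta<\tfrac12$ and as a weighted right-sided fractional integral of order $\beta-\tfrac12$ when $\beta>\tfrac12$, the dual space $\H_\beta^\star$ is identified with $L^2(0,T;H)$ via the transpose of $(\mathscr{K}_\beta^*)^{-1}$. The pointwise bound $\|f(t)\|_H \le c t^{-\mu}\|x_0\|_E$ then produces a finite image under this transpose precisely when $\mu + \beta < 1$, which is the content of (ii). This fractional-calculus estimate, reflecting the sharp trade-off between the Hurst parameter $\beta$ and the admissible singularity exponent $\mu$, is the technical heart of the proof; the weaker the noise (smaller $\beta$), the rougher the control one is permitted, and this is exactly what allows (ii) to be relaxed as $\beta$ decreases.
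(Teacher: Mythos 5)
Your strategy coincides with the paper's own proof: existence via Proposition \ref{XE} with $\theta=0$, reduction of equivalence of laws to null controllability with controls in $\mathscr{H}_\beta^\star$ via Theorems \ref{thm:NC1} and \ref{thm:NC2}, the explicit control $f(t)=-\frac1T B^{-1}S(t)x_0$, and the bound $\|B^{-1}S(t)\|_{\mathscr{L}(E,H)}\le ct^{-\mu}$ obtained from (ii) and analyticity. The problem is the step you yourself identify as the technical heart and then do not carry out: verifying $f\in\mathscr{H}_\beta^\star$. Your claim that the pointwise bound $\|f(t)\|_H\le ct^{-\mu}$ ``produces a finite image under this transpose precisely when $\mu+\beta<1$'' is correct only in the rough case $\beta<\frac12$, where $\mathscr{K}_\beta^{-1}=c_\beta\, u_{\beta-\frac12}I_{0+}^{\frac12-\beta}u_{\frac12-\beta}$ is a weighted fractional \emph{integral}, so that
\begin{equation}
\|\mathscr{K}_\beta^{-1}f(t)\|_H\;\le\; c\,t^{\beta-\frac12}\int_0^t\frac{s^{\frac12-\beta-\mu}}{(t-s)^{\beta+\frac12}}\,ds\;=\;\frac{c}{t^{\beta-\frac12+\mu}},
\end{equation}
which is square-integrable on $(0,T)$ exactly when $\mu<1-\beta$.

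In the smooth case $\beta>\frac12$, however, $\mathscr{K}_\beta^{-1}=c_\beta\, u_{\beta-\frac12}D_{0+}^{\beta-\frac12}u_{\frac12-\beta}$ involves a fractional \emph{derivative} of order $\beta-\frac12$, and no pointwise bound on $f$ can control it: the singular integral $\int_0^t\bigl(s^{\frac12-\beta}f(s)-r^{\frac12-\beta}f(r)\bigr)(s-r)^{-\beta-\frac12}\,dr$ requires a quantitative increment estimate for $s\mapsto B^{-1}S(s)x_0$. This is where the paper does the real work: choosing $\beta-\frac12<\delta<\frac12$, it factors the increment through $(z_0-A)^{-\delta}\bigl(S(t-\tfrac34 s)-S(\tfrac s4)\bigr)$ and uses analyticity together with (ii) to bound the inner integral by $c\,t^{1-2\beta-\mu}$, which again yields $\int_0^T t^{-(2\beta-1+2\mu)}\,dt<\infty$ iff $\mu<1-\beta$. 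Without this H\"older-type estimate your argument does not close for $\beta>\frac12$. A minor further point: Theorems \ref{thm:NC1} and \ref{thm:NC2} presuppose stochastic integrability of $S(T-\cdot)B$; you assert this follows from (i), which is consistent with Proposition \ref{XE}, but it deserves an explicit sentence rather than being folded into the existence step.
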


\begin{proof}
Since $B$ is injective and $S(t)$ maps $E$ into $D(A)$ for all $t>0$, the
operators
$B^{-1}S(t)$ are well-defined, and by the closed graph theorem they are bounded.
By (ii),
\begin{equation}\label{(ii)}
\n B^{-1} S(t)\n _{\mathscr L(E,H)} \le \n B^{-1} (z-A)^{-\mu}\n _{\mathscr
L(E,H)}
\n (z-A)^{\mu}S(t)\n _{\mathscr L(E)} \le \frac{c}{t^{\mu}},
\end{equation}
where the last step uses the analyticity of the semigroup $S$.

a) {\em The case $0<\beta <\frac12$.}
By Proposition \ref{XE} and Theorem \ref{thm:NC1} it remains to show the null
controllability of the equation
\[
\dot y = Ay + Bu
\]
on $[0,T]$ for the space of controls $\mathscr H^\star$. Note that
\[
\n\varphi\n_{\mathscr H} = \n\mathscr K_\b^*\varphi\n_{L^2(H)}.
\]
Hence for $\varphi \in \mathscr H^\star$ we have
\begin{align*}
\n \varphi\n_{\mathscr H^\star} &= \sup_{\n h\n_{\mathscr H} \le 1} |[\varphi,
h]_{L^2(H)}| = \sup_{\n g\n_{L^2(H)}\le 1} \Big| \int^T_0 [\varphi,
(\mathscr K_\b^*)^{-1} g]_H \,ds\Big| \\
&= \sup_{\n g\n_{L^2(H)}\le 1} \Big| \int^T_0 [\mathscr K_\b^{-1} \varphi,
g]_H \, ds \Big| = \Big( \int^T_0 \n\mathscr K_\b^{-1} \varphi\n^2_H
\,ds\Big)^{\frac12},
\end{align*}
taking into account  that $((\mathscr K_\b^*)^{-1})^* = \mathscr K_\b^{-1} =
c_{\beta} u_{\beta - \frac12} I^{\frac12-\beta}_{0+} u_{\frac12 - \beta}$, where
$c_{\beta}$
is a constant and $I_{0+}^\alpha$ is the left-sided fractional integral,
\begin{equation}
\left( I_{0+}^\alpha \varphi \right)\left(t\right) =
\frac{1}{\Gamma(\alpha)}\int_0^t \left( t-s \right)^{\alpha-1} \varphi(s) \,
ds,\quad \varphi \in L^2(H).
\end{equation}
Moreover, by \eqref{(ii)} and the fact that the assumption on $\mu$ implies
$\b-\frac12 +\mu<1$,
\begin{align*}
\n\mathscr K_\b^{-1} B^{-1}S(\cdot))(t)\n &\le ct^{\beta -\frac12} \int^t_0
\frac{\n B^{-1}S(s)\n s^{\frac12 -\beta}}{(t-s)^{\beta+\frac12}}\, ds\\
& \le ct^{\beta -\frac12} \int^t_0 \frac{s^{\frac12 -\beta -
\mu}}{(t-s)^{\beta+\frac12}}\,ds = \frac{c}{t^{\beta-\frac12 + \mu}},
\end{align*}
where $c\ge 0$ is a generic constant whose value is allowed to change from line
to line.
Since by assumption $\mu<1-\b$, this shows that
$\mathscr K_\b^{-1} (B^{-1} S(T-\cdot))\in L^2(H)$, and therefore the control
$\hat u_s(t) =
-\frac1{T} B^{-1} S(t)x$ steering  $x$ to zero belongs to the space $\mathscr
H^\star$.

b) {\em The case $\frac12 <\beta <1$.}
As in the previous case we only need to show the null controllability of
the equation
\[
\dot y = A + Bu
\]
on $[0,T]$ for the space of controls $\mathscr H^\star$. Using Theorem
\ref{thm:NC2} and proceeding as in a) it suffices to show that
\begin{equation}\label{INT}
\int^T_0 \n\mathscr K_\b^{-1} \varphi(s)\n^2_H \, ds < \infty
\end{equation}
where $\varphi(s) : = B^{-1} S(s)x, \quad x\in E$, is fixed and
\[
\mathscr K_\b^{-1} = c_{\beta} u_{\beta-\frac12} D^{\beta - \frac12}_{0+}
u_{\frac12-\beta},
\]
and $D^\alpha_{0+}$ denotes the left-sided fractional derivative.
\begin{equation}
\left(D_{0+}^\alpha \psi \right) (t) = \frac{1}{\Gamma(1-\alpha)} \Big( \frac
{\psi(t)}{t^\alpha} + \alpha \int_0^t \frac{\psi(t) - \psi(s) }
{(t-s)^{\alpha+1}} \, ds \Big),\quad \psi \in \H_\b .
\end{equation}
We have, for $\b-\frac12<\d<\frac12$ and $z_0>0$ large enough,
\begin{align*}
&\int^T_0 \n\mathscr K_\b^{-1} \varphi(t)\n^2_H\, dt
\\ & \le c \int^T_0
\frac{\n B^{-1}S(t)x\n^2_H t^{2\beta - 1} \cdot t^{1-2\beta}}{t^{2\beta-1}}\, dt
\\
& \qquad + c\int^T_0 t^{2\beta -1} \Big(\int^t_0
\frac{\n B^{-1}S(t)x-B^{-1}S(s)x\n _H s^{\frac12-\beta}}{(t-s)^{\beta+\frac12}}
\, ds
\Big)^2\,dt \\
& \le c\int^T_0 \Big[ \frac1{t^{2\beta-1+2\mu}} + t^{2\beta-1} \times
\\ & \qquad
\Big(\int^t_0
\frac{\n B^{-1}S(\frac{s}{4})\n \n(z_0-A)^{-\d}(S(t-\frac34s)-S(\frac{s}{4}))\n
_H\n (z_0-A)^\d S(\frac{s}{2})
x\n }{(t-s)^{\beta+\frac12}s^{\beta-\frac12}}\,ds \Big)^2 \Big] \, dt\\
& \le c\int^T_0 \Big[ \frac1{t^{2\beta-1+2\mu}}+t^{2\beta-1}
\Big( \int^t_0 \frac{1}{(t-s)^{\beta+\frac12-\delta}
s^{\delta+\mu+\beta-\frac12}}\,dr\Big)^2\Big] \, dt
\end{align*}
for generic constants $c$, where we
used \eqref{(ii)} and analyticity of the semigroup $S$. It follows
that
\begin{align*}
\int^T_0\n \mathscr K_\b^{-1}\varphi(t)\n ^2_H \, dt &\le  c\int^T_0 \Big[
\frac1{t^{2\beta-1+2\mu}} + t^{2\beta-1}
\Big(\frac1{t^{2\beta+\mu-1}}\Big)^2 \Big] \, dt\\
&= c \int^T_0 \frac1{t^{2\beta -1 + 2\mu}} \, dt,
\end{align*}
which is finite since $\mu < 1-\beta$. Therefore, the
control $\hat u(t) := - \frac1{T} B^{-1} S(t) x$ steering the solution from $x$
to zero belongs to $\mathscr H^\star$ as required.
\end{proof}

\begin{example}
[$2m-th$ order parabolic equation]\label{Y2}
We consider the problem
\begin{equation*}
\left\{
\begin{aligned}
\frac{\partial y}{\partial t}(t,\xi) &=(L_{2m}y)(t,\xi) + \eta^{\beta}(t,\xi), \
&& (t,\xi)\in (0,T)\times D, \\
y(0,\xi) &= x(\xi), && \xi\in D,
\end{aligned}
\right.
\end{equation*}
with the Dirichlet boundary conditions
\[
\frac{\partial^ky}{\partial \nu^k} (t,\xi) = 0, \quad k = 0,\dots,m-1, \
(t,\xi)\in
(0,T) \times \partial D
\]
where $\frac{\partial}{\partial \nu}$ denotes the co-normal derivative, $D\subset
\mathbb R^d$ is a bounded domain with a smooth boundary and
\[
L_{2m} = \sum_{|\alpha|\le 2m} a_{\alpha}(\xi) D^{\alpha}
\]
with $a_{\alpha}\in C^{\infty}_{\rm b} (D)$ is a uniformly elliptic operator on
$D$.
We let $A$ denote its realisation in  $E=L^2(D)$.
The Gaussian noise is fractional in time
and is modelled as
\[
\eta^{\beta} (t,\xi) = \Big(B\frac{\partial}{\partial t} W^{\beta}
(t,\cdot)\Big)(\xi),
\]
where $B$ is a given bounded injective operator on $H = E = L^2(D)$.

Suppose that condition (ii) of the theorem holds with exponent $\mu\in [0,
1-\b)$ (where we may put $z_0 = 0$), suppose that there exists $\mu ' \in [0,\mu
] $ such that
\begin{equation}\label{range2}
{\rm Range}(B) \subset {\rm Dom}(-A)^{\mu'}
\end{equation}

 and assume that
\begin{equation}\label{dm}
\frac{d}{4m} < \beta + \mu'.
\end{equation}
Then condition (i) of the theorem is satisfied as well. Indeed, as is well
known,
$(-A)^{-\rho}$ is Hilbert-Schmidt for any
$\rho>\frac{d}{4m}$, so
\begin{align*}
\n S(t)B\n_{\mathscr L_2(H,E)}
& \le c \n S(t)(-A)^{-\mu'}\n _{\mathscr L_2(E)} \\ & \le
c\n (-A)^{-\rho} \n _{\mathscr L_2(E)} \ \cdot \n (-A)^{\rho-\mu'}S(t)\n
_{\mathscr
L(E)} \le c \frac{c}{t^{\rho-\mu'}}.
\end{align*}
It follows that (i) is satisfied. Thus,
equivalence of the laws of $Z_T^x$, $x\in E$, is obtained if \eqref{dm} and the
condition (ii) of the theorem hold.
Note that \eqref{dm} is always satisfied if $\frac{d}{4m} < \beta$.

For example, for the stochastic heat equation $(m=1)$ with the noise fractional
in time and white in space $(\mu=\mu' =0$) the result holds if $\beta
>\frac{d}{4}$.
\end{example}

The condition (ii) of Theorem \ref{thm:Y1} is just sufficient and not
necessary, as can be seen in the following example.

\begin{example}\label{Y3}
In this example we take $H = E$ to be a separable real Hilbert space with
orthonormal basis $(e_n)_{n\ge 1}$ and define the operators $A$ and $B$ by
\[
Ae_n = - \alpha_n e_n, \ \ Be_n = \sqrt{\lambda_n} e_n, \ \ \quad n\ge 1,
\]
with $0 < \lambda_n \le \lambda_0$ and $0 < \alpha_1 \le \a_2\le \dots
\rightarrow \infty$.
Let $H_T$ denote the reproducing kernel Hilbert space associated with the
covariance operator $Q_T$ of the Gaussian
random variable $Z_T =
\int^T_0 S(T-t) B\,dW^{\beta}_t$. By the results of Section \ref{sec:2},
equivalence of laws
for $Z_T^x$, $x\in E$, holds if and only if the range of $S(T)$ is
contained in $H_T$. Since $S(T)$ is self-adjoint,
this happens if and only if there is a constant $c\ge 0$ such that
\[
\n S(T) x\n ^2 \le c \langle Q_Tx,x\rangle,  \quad x\in E.
\]
Under the above assumptions, this is equivalent to
\begin{equation}\label{N}
e^{-2\alpha_n T} \le c q_n, \ \ n\ge 1,
\end{equation}
where $Q_T e_n = q_n e_n$. It is easily seen that \eqref{N} holds if and only if
\begin{equation}\label{necsuf}
\frac{\alpha^{2\beta}_n}{\lambda_n} e^{-2\alpha_n T}
\end{equation}
is bounded. This follows from the fact that there exist some constants $C_1,C_2
\ge 0$ such that
$$
c_1 \frac{\lambda_n}{\alpha^{2\beta}_n} \le q_n \le c_2
\frac{\lambda_n}{\alpha^{2\beta}_n}
$$
For $\beta > \frac12$, this was proved in \cite{DMP1}. If $0 < \beta < \frac12$
we have
\[
q_n (T) = \langle Q_Te_n,e_n\rangle = \lambda_n \int^T_0 \n \mathscr K_\b^*
\psi_n(t)\n_H ^2 \, dt
\]
where $\psi_n (t) = e^{-\alpha_nt}$.
Furthermore,
\begin{align*}
\int^T_0 \n \mathscr K_\b^*\psi_n (t)\n_H ^2 \,dt \cong\n \psi_n\n
^2_{H^{\frac12-\beta}(0,T)} & =
\int^T_0 \int^T_0 \frac{|e^{-\alpha_nr}-e^{-\alpha_ns}|^2}{|r-s|^{2-2\beta}}
\,dr\,ds\\
&= \frac1{\alpha_n^{2\beta}} \int^{\alpha_nT}_0 \int^{\alpha_nT}_0 \xi(t,s)\,dt
\,ds
\end{align*}
where $\xi(t,s) = \frac{|e^{-t} - e^{-s}|^2}{|t-s|^{2-2\beta}}$, and the
conclusion easily follows.

Clearly, \eqref{necsuf} is implied by, but does not necessarily imply, condition
(ii) of the theorem.
\end{example}

\begin{example}\label{ex:S0}
Consider the 1D stochastic parabolic equation with inhomogeneous
Neumann boundary conditions of fractional noise type, formally written as
\begin{equation}\label{eq:S1}
\left\{
\begin{aligned}
\frac{\partial y}{\partial t} (t,\xi) & = \frac{\partial}{\partial
\xi}\Big(p(\xi)\frac{\partial y}{\partial \xi}\Big)(t,\xi) + q(\xi)y(t,\xi), &&
(t,\xi)
\in (0,T) \times (0,1),\\
y(0,\xi) & = x(\xi), &&  \xi \in (0,1), \\
\frac{\partial y}{\partial \xi} (t,0) & = \sigma_0 \dot B_1^{\beta} (t), \
\frac{\partial y}{\partial \xi} (t,1) = \sigma_1 \dot B_2^{\beta} (t), && t\in
(0,T),
\end{aligned}
\right.
\end{equation}
where $W^{\beta}=(B^{\beta}_1,B^{\beta}_2)$ is a two-dimensional standard fBm
with the Hurst parameter $\beta \in (0,1)$, $\sigma_0,\sigma_1$ are real
constants, the initial datum $x$ belongs to $L^2(0,1)$, $p\in C^2[0,1]$ is
strictly positive,
and $q \in C[0,1]$.

It is standard to rewrite the formal equation \eqref{eq:S1} in the
infinite-dimensional form considered in the previous section (see e.g.
\cite{DaZ2, DMP1} and references therein),
\begin{equation}\label{eq:S2}
\left\{
\begin{aligned}
dX_t &= AX_t\, dt + B\,dW_t^{\beta}, \quad t\in [0,T], \\
X_0 &= x.
\end{aligned}
\right.
\end{equation}
Here $A$ is the realisation in
in the space $E = L^2(0,1)$ of the partial differential operator
$$
A = \frac{\partial}{\partial \xi} \left(p(\cdot)\frac{\partial}{\partial \xi}
\right) + qI
$$
with domain
$$
\text{Dom}(A) = \{\varphi \in E; \varphi,\varphi' \ \text{absolutely
continuous}, \ \varphi''\in E, \ \varphi'(0) = \varphi'(1) = 0\}.$$
The operator $A$ is uniformly elliptic with homogeneous Neumann boundary
conditions and it is well known that it generates a strongly continuous and
analytic semigroup $S$ on $E$.
In order to define the operator $B$ we set $H = \R^2$.  Fix a constant $k> \max
q$ and consider the
second order boundary value problem
$$
\begin{aligned}
kz - Az & = 0 \  \ \text{on} \ (0,1)\\
\frac{\partial z}{\partial \xi} (j) & = u_j, \ \ j\in \{0,1\},
\end{aligned}
$$
for $(u_0,u_1) \in\mathbb R^2$. This problem has a unique solution for
every $(u_0,u_1) \in\mathbb R^2$. The Neumann map $N: (u_0,u_1) \mapsto z$ is an
element of the space $\mathscr L(\mathbb R^2, E^{\varepsilon})$ for arbitrary
$\varepsilon < \frac34$ (see \cite{DaZ2, MPi}
for details). Setting $B= \hat AN$, $\hat A \in\mathscr
L(E^{\varepsilon}, E^{\varepsilon-1})$ is the isomorphic extension of the
operator $kI-A$, we thus have $B\in \mathscr L(\mathbb R^2, E^{\varepsilon
-1})=\mathscr L(H,E^{\varepsilon -1})$. Now, the infinite-dimensional form (the
mild solution) of the equation \eqref{eq:S2} reads
\begin{equation}\label{eq:S3}
X_t^x = S(t) x + \int^t_0 S(t-r) B\,dW^{\beta}(r), \quad t\in [0,T],
\end{equation}
(here the extension of the semigroup $S(t),\ t>0$ to the space $\mathscr L
(E^{\epsilon -1},E)$ is denoted again by $S(t)$) which fits in the framework
developed in Section \ref{sec:2} with the spaces $H = \R^2$
and $E_1 = E^{\varepsilon -1}$ (for more detailed justification of this
approach we refer to \cite{DaZ2, DMP1, MPi}).

Assume that $\beta \in (\frac14,1)$ and fix $\varepsilon \in (1-\beta,
\frac34)$. Then
by the analyticity of the semigroup $S$,
$$
\n S(t)B\n_{\mathscr L_2(H,E)} \le c_1 \n S(t)\n _{\mathscr L(E^{\varepsilon
-1}, E)}
\n B\n _{\mathscr L(\mathbb R^2,E^{\varepsilon -1})} \le c_2 t^{\varepsilon -1},
\quad
t\in (0,T],
$$
for some constants $c_1,c_2$, so by \cite{DMP1} (or Proposition \ref{XE})
the mild solution
\eqref{eq:S3} is a well-defined $E$-continuous process. Our aim is to
investigate the equivalence of probability distributions $\mu^x_T$ of
solutions $X_T^x$ corresponding to initial datum $x\in E$. To this end, we use
the null
controllability result of Fattorini and Russel \cite{FR} for the controlled
equation
\begin{equation}\label{eq:S3a}
\left\{\begin{aligned}
\frac{\partial y}{\partial t} (t,\xi) & = (Ay)(t,\xi),  && t\in (0,T), \ \xi\in
(0,1),\\
y(0,\xi) & = x(\xi), && \xi\in (0,1), \\
\frac{\partial}{\partial\xi}y(t,0) & = u_0(t), \
\frac{\partial}{\partial\xi}y(t,1) = u_1 (t), && t\in (0,T),
\end{aligned}
\right.
\end{equation}
with $u = (u_1,u_2)\in L^2(0,T)$.
 First, note that the operator $A$ is self-adjoint and possesses a sequence
$(-\lambda_n)_{n\ge 1}$ of real eigenvalues such that
$$
\lambda_1 < \lambda_2 < \lambda_3  < \dots
$$
and $\lim_{n\rightarrow \infty}\lambda_n = +\infty$. Moreover, there is a real
constant $\alpha$ such that
\begin{equation}\label{eq:S4}
\lambda_n = \frac{\pi^2}{L^2} (n+\alpha)^2 + O(1), \ n \rightarrow \infty,
\end{equation}
where $L= \int^1_0 p^{-\frac12}(z)\,dz$ (cf. \cite{A, R}). Denoting by
$(e_n)_{n\ge 1}$ the normalized eigenfunctions corresponding to
$(\lambda_n)_{n\ge 1}$, the
solution to the equation \eqref{eq:S3a} may be expressed by the expansion $y(T)
= \sum^{\infty}_{n=1} y_n (T) e_n$ in $L^2(0,1)$, where
\begin{equation}\label{eq:S5}
y_n(T) = e^{-\lambda_nT} x_n + \int^T_0 e^{-\lambda_n(T-t)} \beta^n_0 u_0
(t)\,dt
+\int^T_0 e^{-\lambda_n(T-t)} \beta^n_1 u_1 (t)\, dt
\end{equation}
with $x_n = \langle x,e_n\rangle$ and
$$
\beta^n_0 = -p(0)\sigma_0 e_n(0), \ \beta^n_1 = -p(1)\sigma_1 e_n(1)
$$
(cf. \cite{FR}). It is clear that \eqref{eq:S3a} is not controllable in any
usual sense if $\sigma_0 = \sigma_1 =0$. Assume that at least one of the
constants $\sigma_0,\sigma_1$ is not zero (for instance, $\sigma_0 \not=0$). Set
$$
c_n = \frac{e^{-\lambda_nT}x_n}{\beta_0^n}.
$$
It is known that $\beta^n_0 \sim \text{const} \sqrt{|\lambda_n|}$ (cf.
\cite{FR}). Therefore, taking $\eta > 0$ arbitrary:
$$
\sum^{\infty}_{n=0} |c_n \lambda_n| \text{exp} \{(L+\eta)\sqrt{|\lambda_n|}\}
\le c \sum^{\infty}_{n=0} |x_n| \sqrt{|\lambda_n|} e^{-\lambda_nT}
\text{exp} \{(L+\eta)\sqrt{|\lambda_n|}\} < \infty
$$
by \eqref{eq:S4}. Hence by \cite[Corollary 3.2]{FR},
 there exists a solution $h\in L^2(0,T)$ to the moment problem
$$
\int^T_0 e^{-\lambda_nt}h(t) \,dt = \lambda_n c_n, \quad n\ge 1.
$$
It follows that $u_0(t):=\int^t_0 h(s)\, ds$ solves the moment problem
$$
\int^T_0 e^{-\lambda_n t} u_0 (t)\,dt = c_n
$$
and therefore, the control $u(t) = (u_0(t),0)$ steers the point $x$ to zero at
$t=T$. Obviously, $u\in W^{1,2}(0,T,\mathbb R^2)$ and it is easy to verify that
$W^{1,2}(0,T,\mathbb R^2)\subset\mathscr H^\star$ for each value of the Hurst
parameter $\beta \in (0,1)$. Summarizing, by Theorems \ref{thm:NC1} and
\ref{thm:NC2} we obtain that for each $\beta \in (0,1)$ the measures $\mu^x_T$,
$x\in E$,
are equivalent whenever $\sigma^2_0 +\sigma^2_1 \not=0$.
\end{example}

\section{Strong Feller property for semilinear equations}\label{sec:4}

In this section we present some applications of the general results from the
previous part
to stochastic semilinear equations with additive fractional noise. It
is shown that the null controllability of the deterministic equation
\begin{equation}\label{R00}
 y'=Ay +Bu
\end{equation}
 in the appropriate sense is equivalent to the continuous dependence of
probability laws of solutions to the corresponding stochastic semilinear
equation on the initial datum. The latter property in the Markovian case is
called the {\em strong Feller property}.

Consider the semilinear equation
\begin{equation}\label{eq:R1}
\left\{
\begin{aligned}
dX_t^x &= (AX_t^x + F(X_t^x)\,dt + B\,dW^{\beta}_t, \quad t\in (0,T),\\
X_0^x &= x
\end{aligned}
\right.
\end{equation}
in the space  $E$, which is here for simplicity assumed to be a real separable
Hilbert space (cf. Remark \ref{rem:R29} below). The operators $A$ and $B$ and
the
driving noise $W^{\beta}_t$ are
the same as in the linear case (i.e. $W^{\beta}_t$ is an $H$-isonormal Gaussian
process representing the $H$-cylindrical fBm with the Hurst parameter
$\beta \in (0,1)$). The operator $A: \text{Dom}(A)\subset E\rightarrow E$ is
assumed to generate an analytic semigroup $S=(S(t))_{t\ge 0}$ on $E$ and the
condition
\eqref{eq:X8} of Proposition \ref{XE} is supposed to hold. Under these
assumptions,
for each initial datum $x\in E$ the mild solution
$(Z^x_t)_{t\in [0,T]}$ of the linear equation \eqref{eq:X77} exists and has a
modification
with paths in $C^{\delta}([0,T],E)$ for all $0 \le \delta < \beta -\lambda$.
Let us now in addition assume that $B\in\mathscr L(H,E)$ is injective and that
the range of the
nonlinear function $F: E
\rightarrow E$ is contained in the range of $B$. This allows us to define the
function $G: E \rightarrow H$,
$G := B^{-1} F$. We impose the following conditions on $G$:

\begin{itemize}
\item[(G)] The function $G: E \rightarrow H$
 is continuous and has at most linear
growth, i.e.
\begin{equation}\label{eq:R2}
\n G(x)\n _H \le k (1+\n x\n _E), \quad x\in E,
\end{equation}
for some $k\ge 0$. If $\beta > \frac12$ we make the additional H\"older
continuity assumption
\begin{equation}\label{eq:R3}
\n G(x) - G(y)\n _H \le k\n x-y\n ^{\alpha}_E, \quad x,y \in E,
\end{equation}
for some $k\ge 0$ and
\begin{equation}\label{eq:R4}
\alpha > \frac{\beta-\frac12}{\beta-\lambda}.
\end{equation}
\end{itemize}

Define the integral operator $\mathbb K_{\beta}$ induced by the kernel
$K_\b(t,s)$
(cf. \eqref{eq:2.1}),
\begin{equation}\label{eq:R5}
(\mathbb K_{\beta}\varphi)(t) =\int^t_0 K_\b(t,s) \varphi(s)\,ds
\end{equation}
for $\varphi \in L^2(0,T,H)$. By \cite{c31}, the operator
$$
K_{\beta} : L^2(0,T,H)\rightarrow I_{0+}^{\beta+\frac12}(L^2(0,T,H))
$$
is a bijection and its inverse $\mathbb K_\b^{-1}$ may be expressed,
for $\varphi \in
I^{\beta+\frac12}_{0+}(L^2(0,T,H))$, as
\begin{equation}\label{eq:R6}
(\mathbb K_\b^{-1} \varphi)(t) = c_{\beta} t_{\frac12-\beta}
D_{0+}^{\frac12-\beta} (t_{\beta-\frac12} D^{2\beta}_{0+}\varphi)(t)
\end{equation}
for $\beta \in (0,\frac12)$ and
\begin{equation}\label{eq:R7}
(\mathbb K_\b^{-1}\varphi)(t) = c_{\beta} t_{\beta-\frac12}
D^{\beta-\frac12}_{0+} (t_{\frac12-\beta}D\varphi)(t)
\end{equation}
for $\beta \in (\frac12,1)$. If moreover $\varphi \in W^{1,2} (0,T,H)$
we have
\begin{equation}\label{eq:R8}
(\mathbb K_\b^{-1} \varphi)(t) = c_{\beta} t_{\beta-\frac12}
I^{\frac12-\beta}_{0+} (t_{\frac12-\beta}D\varphi) (t)
\end{equation}
for $\beta \in (0,\frac12)$ (here $c_{\beta}$ is a positive constant depending
only on $\beta \in (0,1)$) (cf. \cite{NuO}).

By \eqref{norm}, the Gaussian process $\widetilde W$ defined as $\widetilde
W(h) : = W^{\beta} ((\mathscr K_\b^*)^{-1}h)$, where $\mathscr K_\b^*$ is the
operator
defined in $\eqref{eq:2.6}$,  is isonormal on $\mathscr H = L^2(0,T,H)$, i.e. it
is the classical white noise (see also \cite{BHOZ}). The following result has
been proved in \cite{DMP3}.

\begin{proposition}\label{prop:R9}
Assume that \eqref{eq:X8} and $(G)$ are satisfied. Then the
equation $\eqref{eq:R1}$ has a weak (in the probabilistic sense) solution
$(X(t))$ satisfying $X(0)=x$ which is weakly unique. Moreover, for each $x\in E$
and $T>0$ the probability laws $\mu^x_T$ and $\nu^x_T$ are equivalent, where
$\mu_T^x= \text{\rm Law} (Z^x_T)$ and $\nu^x_T = \text{\rm Law}(X_T^x)$,
and the density is given by
\begin{equation}\label{eq:R10}
\widetilde {\mathbb E} \varphi (X_T^x) = \mathbb E \varphi (Z^x_T) \rho_T(x)
\end{equation}
where $\widetilde {\mathbb E}$ is the expectation with respect to the
probability space
where the process $(X_t^x)$ is defined, $\varphi : E \rightarrow \mathbb R$ is
bounded Borel measurable and
\begin{equation}\label{eq:R11}
\begin{aligned}
\rho_T (x) := \text{\rm exp}\Big\{ &\int^T_0 \Big\langle \mathbb
K_\b^{-1} \Big( \int^{\cdot}_0 G(Z^x_s)\,ds\Big)(t)\, d\widetilde
W\Big\rangle_H \\
&-\frac12 \int^T_0 \Big\n \mathbb K_\b^{-1}
\Big(\int^{\cdot}_0G(Z^x_s)\,ds\Big)(t)\Big\n ^2_H\,dt\Big\}. 
\end{aligned}\end{equation}
\end{proposition}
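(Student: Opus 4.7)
My plan is to follow a Girsanov-type strategy: starting from the Ornstein--Uhlenbeck process $(Z^x_t)$ defined on the original probability space, construct an equivalent probability measure $\widetilde\P$ under which $Z^x$ itself becomes a weak solution of \eqref{eq:R1}. Since $F$ takes values in the range of $B$, the drift rewrites as $F(X^x_t)\,dt = B\,G(X^x_t)\,dt$, so one wants to absorb the drift by shifting $W^\beta$ by the process $t\mapsto \int_0^t G(Z^x_s)\,ds$. The clean way to carry this out is through the $L^2(0,T;H)$-isonormal white noise $\widetilde W$ related to $W^\beta$ via $W^\beta(h)=\widetilde W(\mathscr K_\b^* h)$, for which the classical Girsanov theorem applies directly. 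Concretely, I would put
\[
u_x(t) := \mathbb K_\b^{-1}\Big(\int_0^\cdot G(Z^x_s)\,ds\Big)(t),
\]
so that $(\mathbb K_\b u_x)(t)=\int_0^t G(Z^x_s)\,ds$, and use $u_x$ as the Girsanov shift. Under $d\widetilde\P:=\rho_T(x)\,d\P$ with $\rho_T(x)$ the exponential martingale \eqref{eq:R11}, the process $\widetilde W-\int_0^\cdot u_x(s)\,ds$ becomes a cylindrical Brownian motion; transporting this back through $\mathscr K_\b^*$ shows that $W^\beta$ under $\widetilde\P$ is an fBm plus the drift $\int_0^\cdot G(Z^x_s)\,ds$, whence $(Z^x_t)$ solves \eqref{eq:R1} under $\widetilde\P$ and the density formula \eqref{eq:R10} follows at once.

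The main obstacle will be showing that $u_x\in L^2(0,T;H)$ almost surely with enough integrability for Novikov's condition $\E\exp\bigl(\tfrac12\int_0^T\n u_x(t)\n^2_H\,dt\bigr)<\infty$. For $\beta\in(0,\tfrac12)$ I would apply \eqref{eq:R8} to $\varphi(t)=\int_0^t G(Z^x_s)\,ds\in W^{1,2}(0,T;H)$, obtaining the explicit expression $u_x(t)\simeq c_\beta t^{\beta-\frac12}I_{0+}^{\frac12-\beta}\bigl(s^{\frac12-\beta}G(Z^x_s)\bigr)(t)$; the linear growth \eqref{eq:R2} together with continuity of $Z^x$ in $E$ makes this square-integrable in $t$, while Gaussian concentration for the Ornstein--Uhlenbeck process yields Fernique-type tails of $\sup_t\n Z^x_t\n_E$ that are more than sufficient for Novikov. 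The case $\beta\in(\tfrac12,1)$ is more delicate: formula \eqref{eq:R7} involves a genuine right-sided fractional derivative $D_{0+}^{\beta-\frac12}$ applied to $t\mapsto t^{\frac12-\beta}G(Z^x_t)$, and one genuinely needs Hölder regularity. Proposition \ref{XE} provides $Z^x\in C^\delta([0,T];E)$ for every $\delta<\beta-\lambda$, and assumption (G) upgrades this to $G(Z^x_\cdot)\in C^{\alpha\delta}([0,T];H)$; condition \eqref{eq:R4} is engineered precisely so that $\alpha\delta>\beta-\tfrac12$ for some admissible $\delta$, which is the sharp threshold for Hardy--Littlewood-type estimates to control $D_{0+}^{\beta-\frac12}$ as a map into $L^2$.

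Once the change of measure is in place, weak uniqueness will follow by running the argument in reverse: any weak solution $(X^x_t)$ of \eqref{eq:R1}, via the inverse Girsanov shift at the level of its associated white noise, is recast as an Ornstein--Uhlenbeck process under an equivalent measure, whose law is uniquely determined. The density formula \eqref{eq:R10}--\eqref{eq:R11} is then nothing but the standard Girsanov identity pushed from $\widetilde W$ back to $W^\beta$, and the linear growth in (G) makes the reverse shift equally well-defined, closing the loop.
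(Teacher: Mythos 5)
The paper gives no proof of this proposition at all --- it is quoted verbatim from \cite{DMP3} --- and your Girsanov strategy (passing to the white noise $\widetilde W$ via $\mathscr K_\b^*$, shifting by $u_x=\mathbb K_\b^{-1}\bigl(\int_0^\cdot G(Z^x_s)\,ds\bigr)$, reading off \eqref{eq:R11} as the resulting exponential density, and reversing the shift for weak uniqueness) is exactly the argument carried out there, including the case split at $\b=\frac12$ and the role of \eqref{eq:R4}. The one place where your sketch is materially thinner than the actual proof is the verification of Novikov's condition: Fernique only gives $\E\exp\bigl(\e\sup_{t\le T}\n Z^x_t\n_E^2\bigr)<\infty$ for sufficiently small $\e$, whereas the Novikov exponent is $\tfrac12\int_0^T\n u_x(t)\n_H^2\,dt\le C_T\bigl(1+\sup_{t\le T}\n Z^x_t\n_E\bigr)^2$ with a constant $C_T$ that need not lie below the Fernique threshold, so one must subdivide $[0,T]$ into small intervals and concatenate the densities (or use a weaker sufficient criterion); this step is routine but cannot be waved away by ``more than sufficient for Novikov.''
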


Let $\mathscr B(E)$ denote the $\sigma$-algebra of Borel sets in $E$ and $\tau$
the topology of pointwise convergence in the space of finite signed measures on
$\mathscr B(E)$. Thus a net $(\mu_{\gamma})_{\gamma\in \Gamma}$ converges to
$\mu$ in $\tau$ if and only if $\lim_{\gamma\in\Gamma} \mu_{\gamma} (C)=\mu(C)$
for each $C\in\mathscr B(E)$. Now we formulate the main result of this section.

\begin{theorem}\label{thm:R12}
Assume $\eqref{eq:X8}$ and $(G)$. Then for each $T>0$ the following statements
are equivalent:

\begin{itemize}
\item[(i)] The controlled deterministic system $\eqref{R00}$ is null
controllable in time $T$.
\item[(ii)] The measures $\mu^x_T$, $x\in E$, are equivalent.
\item[(iii)] The measures $\nu^x_T$, $x\in E$, are equivalent.
\item[(iv)] $\nu^{x_n}_T \rightarrow \nu^x_T$ in $\tau$ whenever $x_n\rightarrow
x$ in $E$ (strong Feller property).
\end{itemize}
\end{theorem}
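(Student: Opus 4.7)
The plan is to establish the cycle (i)$\Leftrightarrow$(ii)$\Leftrightarrow$(iii)$\Rightarrow$(iv)$\Rightarrow$(i). The first two equivalences are essentially immediate from results already proved: (i)$\Leftrightarrow$(ii) is Theorem~\ref{thm:NC1} for $\b<\tfrac12$ and Theorem~\ref{thm:NC2} for $\b>\tfrac12$, applied to $\Phi=S(T-\cdot)B$ with controls drawn from $\H^\star$; and (ii)$\Leftrightarrow$(iii) follows at once from Proposition~\ref{prop:R9}, which gives $\nu^x_T\sim\mu^x_T$ for each fixed $x\in E$ and therefore transports mutual equivalence across initial data from one family to the other.

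The implication (iv)$\Rightarrow$(i) I would handle by contradiction, using the Feldman--Hajek theorem. Suppose (i) fails and pick $x_0\in E$ with $S(T)x_0\notin\G_T$; since $\G_T$ is a linear subspace, $S(T)(x_0/n)\notin\G_T$ for every $n\ge 1$, so $\mu^{x_0/n}_T$ and $\mu^0_T$ are mutually singular, and by Proposition~\ref{prop:R9} so are $\nu^{x_0/n}_T$ and $\nu^0_T$. Choose Borel sets $D_n$ with $\nu^0_T(D_n)=1$ and $\nu^{x_0/n}_T(D_n)=0$ and put $C:=\bigcap_n D_n$; then $\nu^0_T(C)=1$ (by countable subadditivity applied to the complements $D_n^c$) while $\nu^{x_0/n}_T(C)=0$ for every $n$. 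Since $x_0/n\to 0$ in $E$, this contradicts~(iv) evaluated on the Borel set $C$.

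The main obstacle is (iii)$\Rightarrow$(iv), which I would attack via the Maslowski--Seidler strategy. Fix a sequence $x_n\to x$ in $E$ and a bounded Borel $\varphi\colon E\to\R$; by Proposition~\ref{prop:R9},
\[
\wt\E\,\varphi(X^y_T)=\E\bigl[\varphi(Z^y_T)\,\rho_T(y)\bigr],\qquad y\in E.
\]
Since (iii) implies (i) through the already established equivalences, the closed graph theorem yields $S(T)\in\calL(E,\G_T)$, and so $\n S(T)(x_n-x)\n_{\G_T}\to 0$. A Cameron--Martin shift then rewrites $\E[\varphi(Z^{x_n}_T)\rho_T(x_n)]$ as an integral against the reference law $\mu^x_T$ with an extra density factor $q_n=d\mu^{x_n}_T/d\mu^x_T$; the explicit Gaussian formula for $q_n$ forces $q_n\to 1$ in every $L^p(\mu^x_T)$ as $n\to\infty$. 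Combined with the deterministic convergence $Z^{x_n}=S(\cdot)x_n+Z\to Z^x$ in $C([0,T];E)$ almost surely and hypothesis~(G), one checks that $\mathbb K_\b^{-1}(\int_0^{\cdot}G(Z^{x_n}_s)\,ds)\to\mathbb K_\b^{-1}(\int_0^{\cdot}G(Z^x_s)\,ds)$ in $L^2(0,T;H)$, whence $\rho_T(x_n)\to\rho_T(x)$ in probability.

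The technically hardest step is the uniform integrability of $\{\rho_T(x_n)\}$ needed to pass to the limit by dominated convergence; here one exploits the linear growth~\eqref{eq:R2}, the H\"older bound~\eqref{eq:R3}--\eqref{eq:R4} in the smooth regime $\b>\tfrac12$, and Gaussian tail estimates for the stochastic convolution to obtain Novikov-type bounds $\sup_n\E\,\rho_T(x_n)^p<\infty$ for some $p>1$. Once this is in hand, $\E[\varphi(Z^{x_n}_T)\rho_T(x_n)]\to\E[\varphi(Z^x_T)\rho_T(x)]$ for every bounded Borel $\varphi$, which is precisely the $\tau$-convergence $\nu^{x_n}_T\to\nu^x_T$ required in~(iv), completing the cycle.
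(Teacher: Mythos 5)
Your overall architecture coincides with the paper's: (i)$\Leftrightarrow$(ii) via Theorems \ref{thm:NC1}/\ref{thm:NC2}, (ii)$\Leftrightarrow$(iii) via Proposition \ref{prop:R9}, a singularity/Feldman--Hajek contradiction for (iv)$\Rightarrow$(ii) (your version, with the intersection $\bigcap_n D_n$, actually fills in a detail the paper leaves implicit), and the Maslowski--Seidler strategy with convergence in probability of the densities $\rho_T(x_n)$ --- which is exactly Lemma \ref{lem:R13} --- for the forward implication. There is, however, a genuine gap at the step you yourself flag as the hardest: you propose to obtain uniform integrability of $\{\rho_T(x_n)\}$ from Novikov-type bounds $\sup_n \E\,\rho_T(x_n)^p<\infty$ for some $p>1$. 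No such bounds are established anywhere in the paper, and under hypothesis (G) alone they are doubtful: the exponent in \eqref{eq:R11} is controlled only by $(1+\n \wt Z\n_{C([0,T];E)})^2$ via the linear growth \eqref{eq:R2}, and $\E\exp(c\n\wt Z\n^2)$ is finite (Fernique) only for $c$ below a threshold, so an $L^p$ bound with $p>1$ would require a smallness restriction on the growth constant $k$ that the theorem does not impose. The paper avoids this entirely: since each $\rho_T(x_n)$ is a probability density ($\E\rho_T(x_n)=1$ by Proposition \ref{prop:R9}) and $\rho_T(x_n)\to\rho_T(x)$ in probability with $\E\rho_T(x)=1$, Scheff\'e's lemma (cited as \cite[Theorem II.21]{DeM}) yields $L^1$ convergence and equiintegrability for free. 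You should replace your Novikov step by this observation.

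A second, smaller, issue: passing from convergence of $\E[\varphi(Z^{x_n}_T)\rho_T(x_n)]$ for bounded \emph{continuous} $\varphi$ to convergence on all Borel sets (which is what $\tau$-convergence demands) is not automatic, because for merely Borel $\varphi$ the variables $\varphi(Z^{x_n}_T)$ need not converge to $\varphi(Z^x_T)$. The paper uses the G\"ansler criterion of Remark \ref{rem:R20a}: weak$^*$ convergence plus the equicontinuity condition \eqref{eq:R23}, the latter obtained from the splitting $\nu^{x_n}_T(A_k)\le K\,\mu^{x_n}_T(A_k)+\E\,\mathbf 1_{[\rho_T(x_n)>K]}\rho_T(x_n)$ together with the equiintegrability above and the $\tau$-convergence $\mu^{x_n}_T\to\mu^x_T$ of the Gaussian laws (Cameron--Martin). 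Your Cameron--Martin factor $q_n$ is in the right spirit for that last ingredient, but as written it conflates the endpoint density $d\mu^{x_n}_T/d\mu^x_T$ with the pathwise Girsanov density $\rho_T(x_n)$, which depends on the whole trajectory and not only on $Z^{x_n}_T$; the paper's two-step decomposition keeps these separate.
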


In the proof of Theorem \ref{thm:R12} we use the following lemma:

\begin{lemma}\label{lem:R13}
Assume $\eqref{eq:X8}$ and $(G)$. Then for each $T>0$
\begin{equation}\label{eq:R14}
\rho_T (x_n) \rightarrow \rho_T(x) \ \text{in} \ \mathbb P \ \text{if} \
x_n\rightarrow x \ \text{in} \ E.
\end{equation}
\end{lemma}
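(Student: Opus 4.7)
The plan is to decompose $\rho_T(x) = \exp\bigl(I(x) - \tfrac12 J(x)\bigr)$, where
\[
I(x) := \int_0^T \lb \Psi^x(t), d\widetilde W(t)\rb_H, \qquad J(x) := \int_0^T \n\Psi^x(t)\n_H^2\,dt,
\]
and $\Psi^x := \mathbb K_\b^{-1}\bigl(\int_0^\cdot G(Z^x_s)\,ds\bigr)$. By continuity of $\exp$, it suffices to show $I(x_n)\to I(x)$ and $J(x_n)\to J(x)$ in probability. Both will follow as soon as I establish that $\Psi^{x_n}\to\Psi^x$ in $L^2(0,T;H)$ almost surely: the statement for $J$ is immediate from the definition of the norm, and for $I$ it follows from the standard continuity of the It\^o integral against the white noise $\widetilde W$ with respect to convergence in probability of the integrand in $L^2(0,T;H)$.

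The first step is the deterministic identity $Z^{x_n}_t - Z^x_t = S(t)(x_n-x)$, from which $Z^{x_n}\to Z^x$ uniformly on $[0,T]$ in $E$ at the rate $\sup_{t\le T}\n S(t)\n\,\n x_n-x\n_E$. Since $Z^x$ has a.s.\ continuous $E$-valued paths by Proposition~\ref{XE}, the family $\{Z^{x_n}\}$ is a.s.\ uniformly bounded on $[0,T]$. Continuity of $G:E\to H$ together with the linear growth bound \eqref{eq:R2} then yields, via dominated convergence, $G(Z^{x_n})\to G(Z^x)$ a.s.\ in $L^p(0,T;H)$ for every $p\in[1,\infty)$.

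The crucial step is to propagate this convergence through $\mathbb K_\b^{-1}$. In the rough noise case $\b\in(0,\tfrac12)$, formula \eqref{eq:R8} presents $\Psi^x$ as the composition of multiplication by $s^{1/2-\b}$, a left-sided fractional integral of order $\tfrac12-\b$, and multiplication by $t^{\b-1/2}$, applied to $G(Z^x)$; standard weighted estimates show that this composite acts boundedly from $L^2(0,T;H)$ into $L^2(0,T;H)$, so the $L^2$-convergence of $G(Z^{x_n})$ transfers directly to $\Psi^{x_n}$. The main obstacle is the smooth noise case $\b\in(\tfrac12,1)$, where \eqref{eq:R7} involves the fractional derivative $D^{\b-1/2}_{0+}$; well-definedness of $\Psi^x$ already requires H\"older regularity of $s\mapsto s^{1/2-\b}G(Z^x_s)$, which is precisely why hypothesis (G) postulates \eqref{eq:R3}--\eqref{eq:R4}. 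The $\a$-H\"older estimate on $G$ combined with the $\d$-H\"older regularity of $Z^x$ in $E$ supplied by Proposition~\ref{XE} produces H\"older exponent $\a\d>\b-\tfrac12$ for $G(Z^x_\cdot)$, exactly the threshold needed for the integral representation of $D^{\b-1/2}_{0+}$ to yield an $L^2$ function. Applying the same estimate to $G(Z^{x_n})-G(Z^x)$, with the uniform-in-$n$ H\"older modulus serving as a dominator, gives $\Psi^{x_n}\to\Psi^x$ in $L^2(0,T;H)$ a.s.\ by dominated convergence inside the fractional derivative.

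Once the $L^2$-convergence of $\Psi^{x_n}$ is in hand, $J(x_n)\to J(x)$ a.s.\ is immediate and $I(x_n)\to I(x)$ follows in probability from the continuity of the It\^o integral; continuous mapping with $\exp$ then delivers $\rho_T(x_n)\to\rho_T(x)$ in $\P$. The heart of the argument is thus the fractional-derivative estimate for $\b>\tfrac12$, which must be carried out with sufficient uniformity in $n$ to justify the final dominated convergence step.
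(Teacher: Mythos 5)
Your proposal is correct and follows essentially the same route as the paper: reduce the claim to convergence of $\mathbb K_\b^{-1}\bigl(\int_0^\cdot G(Z^{x_n}_s)\,ds\bigr)$ to $\mathbb K_\b^{-1}\bigl(\int_0^\cdot G(Z^{x}_s)\,ds\bigr)$ in $L^2(0,T;H)$, split into the cases $\b\in(0,\frac12)$ and $\b\in(\frac12,1)$ via the explicit formulas \eqref{eq:R8} and \eqref{eq:R7}, and conclude by dominated convergence using the linear growth of $G$ in the rough case and the H\"older condition \eqref{eq:R3}--\eqref{eq:R4} together with the $C^\d$-regularity of $Z$ in the smooth case. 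The only (immaterial) differences are that the paper establishes the convergence in $L^2(\Omega\times(0,T);H)$ rather than almost surely in $L^2(0,T;H)$, and in the rough case it dominates pointwise instead of invoking $L^2$-boundedness of the weighted fractional integral.
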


\begin{proof}
From the proofs of  \cite[Theorems 3.3 and 3.4]{DMP3} it easily follows that
$$
\mathbb E \int^T_0 \Big\n  \mathbb K_\b^{-1} \Big(\int_0^{\cdot}
G(Z^x_s)\,ds\Big)(t) \Big\n ^2_H\,dt <\infty, \quad x\in E.
$$
Hence it suffices to show that
\begin{equation}\label{eq:R15}
\mathbb E\int^T_0 \Big\n  \mathbb K_\b^{-1}
\Big(\int_0^{\cdot}(G(Z^{x_n}_s)- G(Z^x_s))\,ds\Big)(t)\Big\n ^2_H\,dt
\rightarrow
0
\end{equation}
whenever we have $x_n \rightarrow x$ in $E$. We will show \eqref{eq:R15}
separately for the cases $\beta \in (0,\frac12)$ and $\beta \in (\frac12,1)$.

First, consider the case $\beta \in(0,\frac12)$. By \eqref{eq:R8} we have
\begin{equation}\label{eq:R16}
\begin{aligned}
&\mathbb E  \Big\n  \mathbb K_\b^{-1} \Big( \int^{\cdot}_0 (G(Z_s^{x_n}) -
G(Z^x_s))\,ds \Big)\Big\n ^2_{L^2(0,T,H)} \\
&\le c^2_{\beta} \mathbb E \int^T_0 \Big( s^{\beta-\frac12}\Big\n  \int^s_0
r^{\frac12-\beta} (s-r)^{-\frac12-\beta} (G(Z^{x_n}_r-G(Z^x_r))\,dr\Big\n _H
\Big)^2\,ds.
\end{aligned}
\end{equation}
By continuity of $G$ we have $G(Z^{x_n}_r)-G(Z^x_r)\rightarrow 0$ for each $r\in
[0,T]$ $\mathbb P$-almost surely, and \eqref{eq:R2} yields
\begin{equation}\label{eq:R17}
\n G(Z^{x_n}_r)-G(Z^x_r)\n _H \le 2k (1+ \n S(r)x_n\n _E + \n Z_r^0\n ) \le
L(1+\n \widetilde Z\n _{C([0,T],E)})
\end{equation}
where $L\ge 0$ is a constant independent of $n\in\mathbb N$ and $r\in [0,T]$.
Hence we obtain \eqref{eq:R15} by the dominated
convergence theorem.

Now, consider the case $\beta \in (\frac12,1)$. By \eqref{eq:R7} it follows that
\begin{equation}\label{eq:R18}
\begin{aligned}
&\mathbb E \Big\n  \mathbb K_\b^{-1}\Big( \int^{\cdot}_0(G(Z^{x_n}_s) -
G(Z^x_s))\,ds\Big)\Big\n ^2_{L^2(0,T,H)} \\
&\le c^2_{\beta} \mathbb E\int^T_0 \Big\n
\frac{s^{\beta-\frac12}}{\Gamma(\frac32-\beta)} \Big(
\frac{s^{\frac12-\beta}(G(Z_s^{x_n})-G(Z^x_s))}{s^{\beta-\frac12}} \Big)
\\
&+(\beta-\tfrac12) \int^s_0 \frac{s^{\frac12-\beta}(G(Z_s^{x_n})-G(Z^x_s)) -
r^{\frac12-\beta}(G(Z^{x_n}_r)-G(Z^x_r))}{(s-r)^{\beta+\frac12}}dr \Big\n
^2_H\,ds.
\end{aligned}
\end{equation}
As in the previous case, by
the continuity of $G$ we have $G(Z_s^{x_n})-G(Z^x_s)\rightarrow 0$ for each
$s\in [0,T]$ $\mathbb P$-almost surely. We aim at showing \eqref{eq:R15} by the
dominated convergence theorem. By \eqref{eq:R17} we immediately obtain
\begin{equation}\label{eq:R19}
\mathbb E\int^T_0 \Big\n  \frac{s^{\beta-\frac12}}{\Gamma(\frac32-\beta)}
\frac{s^{\frac12-\beta}(G(Z_s^{x_n})-G(Z^x_s))}{s^{\beta-\frac12}}\Big\n
^2_H\,ds
\rightarrow 0, \ n\rightarrow \infty.
\end{equation}
Furthermore, we have (for a generic constant $c$)
\begin{equation}\label{eq:R20}
\begin{aligned}
&\mathbb E \int^T_0 \Big\n  \int^s_0
\frac{s^{\frac12-\beta}(G(Z^{x_n}_s)-G(Z^x_s))-r^{\frac12-\beta}(G(Z^{x_n}
_r)-G(Z^x_r))}
{(s-r)^{\beta+\frac12}}dr \Big\n ^2_H \,ds \\
&\le c\mathbb E\int^T_0 \Big( s^{\beta-\frac12} \int^s_0
\frac{s^{\frac12-\beta}-r^{\frac12-\beta}}{(s-r)^{\beta+\frac12}} \n
G(Z_r^{x_n})
- G(Z^x_r)\n _H\,dr \Big)^2\,ds \\
&\qquad +c \mathbb E\int^T_0 \Big(\int^s_0 \frac{\n G(Z_s^{x_n})-G(Z^{x_n}_s) -
(G(Z_r^{x_n}) - G(Z^x_r))\n _H}{(s-r)^{\beta+\frac12}}dr\Big)^2\,ds.
\end{aligned}
\end{equation}
The first integral on the right-hand side of \eqref{eq:R20} clearly tends to
zero by the
H\"older continuity condition \eqref{eq:R3} and the inequality
$$
\int^s_0 \frac{s^{\frac12-\beta}-r^{\frac12-\beta}}{(s-r)^{\beta +\frac12}}\,dr
\le
cs^{1-2\beta}, \ s\in(0,T).
$$
Again by \eqref{eq:R3} and analyticity of the semigroup $S$ we have
\begin{align*}
&\frac{\n G(Z_s^{x_n})-G(Z^x_s)-G(Z_r^{x_n}) + G(Z^x_r)\n
_H}{(s-r)^{\beta+\frac12}}
\\
&\qquad \le c \frac{\n S(s)x_n-S(r)x_n\n ^{\alpha}_H + \n S(s)x-S(r)x\n
^{\alpha}_H+
\n \widetilde Z_s - \widetilde Z_r\n ^{\alpha}_H}{(s-r)^{\beta+\frac12}} \\
&\qquad \le c\frac{(s-r)^{\alpha\varepsilon}+(s-r)^{\alpha\delta}\n \tilde
Z\n _{C^{\delta}([0,T],H)}}{r^{\alpha\varepsilon}(s-r)^{\beta+\frac12}}
\end{align*}
where the generic constant $c\ge 0$ does not depend on $n\in\mathbb N$ and
$s,r$,
$s>r$, $s,r \in (0,T)$, and $\varepsilon$ is such that $\alpha\varepsilon < 1$,
$\beta + \frac12 -\alpha\varepsilon < 1$ and $\delta \in
(\frac1{\alpha}(\beta-\frac12),\beta -\lambda)$ (note that this choice is
possible by \eqref{eq:R4} and the fact that $\widetilde Z \in
C^{\delta}([0,T],E))$. This
gives us the convergent majorant for the second integral on the right-hand side
of
\eqref{eq:R20}, and \eqref{eq:R15} follows by dominated convergence.
\end{proof}

\begin{remark}\label{rem:R20a}
Note that a sequence $(\mu _n)$ of Borel probability measures on $E$ is
conditionally sequentially compact in $\tau$ if and only if it is
equicontinuous, i.e.
\begin{equation}\label{eq:R21}
\lim_{k\rightarrow \infty} \sup_{n} \mu_n (A_k) = 0 \ \mbox{for all} \ (A_k)
\subset\mathscr B(E), \ A_k \searrow \emptyset
\end{equation}
 and therefore $\mu_n \rightarrow \mu$ in $\tau$ provided \eqref{eq:R21} and
$\mu_n \rightarrow \mu$ in the $w^*$-topology (that is, weakly in probabilistic
sense), cf. \cite[Theorem 2.6 and Lemma 3.15]{Ga}.

 Now we can complete the proof of Theorem \ref{thm:R12}.
\end{remark}

\begin{proof}
The equivalence (i) $\Leftrightarrow$ (ii) has been proved in Theorems
\ref{thm:NC1} and \ref{thm:NC2}. By Proposition \ref{prop:R9}, for each $T>0$
and
$x\in E$ the measures $\nu^x_T$ and $\mu^x_T$ are equivalent, so we have
trivially (ii) $\Leftrightarrow$ (iii). We prove (iv) $\Rightarrow$ (ii) by
contradiction. If (ii) is false there exist $x_0,x_1 \in E$ and $T>0$ such that
$\mu^{x_0}_T \perp \mu^{x_1}_T$ (Gaussian measures must be singular unless they
are equivalent). By the Feldman-H\'ajek Theorem, we then have $\mu^{x_n}_T \perp
\mu^0_T$ where $x_n = \frac1{n} (x_0-x_1)\rightarrow 0$. By Proposition
\ref{prop:R9} $\nu^{x_n}_T \perp \nu^0_T$ which contradicts (iv). It remains to
show (ii) $\Rightarrow$ (iv). The proof is based on the Lemma~\ref{lem:R13}
above and follows the idea from \cite{MS} (where the proof is given for Markov
case).

First, note that (ii) implies that $\mu^{x_n}_T \rightarrow \mu^x_T$ in $\tau$.
This easily follows from the Cameron-Martin formula (for the density of
$\mu_T^{x_n}$ with respect to $\mu^x_T$, cf. \cite{DaZ} for a similar proof).
Also, by
\cite[Theorem II.21]{DeM} and Lemma~\ref{lem:R13} we immediately obtain that
\begin{equation}\label{eq:R22}
\rho_T (x_n) \rightarrow\rho_T(x) \ \text{in} \ L^1(\Omega)
\end{equation}
and the sequence of densities $\rho_T(x_n)$ is equiintegrable. By Remark
\ref{rem:R20a} it is sufficient to prove
\begin{equation}\label{eq:R23}
\sup_n \nu^{x_n}_T (A_k) \rightarrow 0, \ k\rightarrow \infty
\end{equation}
for arbitrary $(A_k) \subset\mathscr B(E)$, $A_k \searrow \emptyset$, and
\begin{equation}\label{eq:R24}
\int \varphi (y) \,d\nu^{x_n}_T (y) \rightarrow \int \varphi (y) \,d\nu^x_T(y),
\
n\rightarrow \infty
\end{equation}
for each $C_b(E)$. We have
\begin{align*}
&\sup_n \nu^{x_n}_T (A_k) = \sup_n \mathbb E {\bf 1}_{A_k}(Z^{x_n}_T)\rho_T
(x_n) \\
&\qquad \le K \sup_n \mathbb E {\bf 1}_{A_n} (Z^{x_n}_T) + \sup_n \mathbb E {\bf
1}_{[\rho_T(x_n)>K]} \rho_T (x_n)
\end{align*}
for arbitrary $K>0$. Now, we have
$$
\sup_n \mathbb E {\bf 1}_{[\rho_T(x_n)>K]} \rho_T (x_n) \rightarrow 0, \ K
\rightarrow \infty
$$
by equiintegrability of $\rho_T(x_n)$ and
$$
\sup_n \mathbb E {\bf 1}_{A_k} (Z^{x_n}_T) \rightarrow 0, \ k \rightarrow \infty
$$
by Remark \ref{rem:R20a} applied to the linear equation, where we use the fact
that $\mu^{x_n}_T\rightarrow \mu_T^x$ in $\tau$, and \eqref{eq:R23} follows.
Furthermore,
\begin{align*}
& \Big| \int_E \varphi(y) \,d\nu^{x_n}_T(y) - \int_E \varphi(y)
\,d\nu^x_T(y)\Big|\\
& \qquad = |\mathbb E
\varphi (Z_T^{x_n})\rho_T(x_n) -\mathbb E\varphi(Z^x_T)\rho_T(x)|\\
&\qquad \le \mathbb E |(\rho_T (x_n)-\rho(x))\varphi (Z^{x_n}_T)| +\mathbb E
\rho_T (x)
|\varphi(Z^{x_n}_T)-\varphi(Z^x_T)| \\
&\qquad \le \sup \n \varphi\n \mathbb E |\rho_T(x_n)-\rho_T(x)| + K \mathbb E
|\varphi
(Z^{x_n}_T) - \varphi(Z^x_T)|
\\ &\qquad \qquad
 +2 \sup \n \varphi\n \mathbb E {\bf 1}_{[\rho_T(x)>K]} \rho_T (x),
\end{align*}
for arbitrary $K>0$. Clearly, we have $Z^{x_n}_T - Z^x_T = S(T)
(x_n-x)\rightarrow 0$ as $n\rightarrow \infty$, hence using \eqref{eq:R22} we
obtain \eqref{eq:R24}, which concludes the proof of the Theorem.
\end{proof}

\begin{example}\label{ex:R25}
Consider the 1D stochastic equation of reaction-diffusion type
\begin{equation}\label{eq:R26}
\left\{
\begin{aligned}
\frac{\partial y}{\partial t} (t,\xi) &= \frac{\partial^2y}{\partial \xi}
(t,\xi) + f(y,t,\xi)) +\eta^{\beta}(t,\xi), &&
(t,\xi)\in (0,T)\times(0,1)\\
y(0,\xi) &= x(\xi),  && \xi \in (0,1)\\
y(t,0) &= y(t,1) = 0, && t\in (0,T),
\end{aligned}
\right.
\end{equation}
where $f:\mathbb R\rightarrow\mathbb R$ and the noise $\eta^{\beta}$
is fractional in time with the Hurst parameter $\beta\in (0,1)$ and white in
space.
For the linear case $(f=0)$ it is a particular case of the Example \ref{Y2}. The
rigorous interpretation of \eqref{eq:R26} is the equation \eqref{eq:R1} where we
put $H=E= L^2(0,1)$, $B=I$, $A= \frac{\partial^2}{\partial \xi^2}$,
$$
\text{Dom}\,(A) = \{\varphi \in E; \ \varphi,\varphi' \ \text{are} \ AC,\
\varphi'' \in E, \ \varphi(0) = \varphi(1)= 0\}
$$
and $F:E \rightarrow E$, $(F(x))(\xi) := f(x(\xi))$, is the Nemytskii operator.
We need to impose some conditions on $f$ so that the operator $F:E \rightarrow
E$ is well-defined a satisfies the assumptions of the present section. To this
end we
assume  that $f$ is continuous and of at most linear growth,
\begin{equation}\label{eq:R27}
|f(\xi)| \le k(1+|\xi|), \quad \xi\in \mathbb R,
\end{equation}
for some $k\ge 0$, and if $\beta > \frac12$  we assume in addition that $f$ is
also H\"older
continuous,
\begin{equation}\label{eq:R28}
|f(\xi)-f(\eta)| \le k|\xi-\eta|^{\alpha}, \quad \xi, \eta\in \mathbb R,
\end{equation}
for some $k\ge 0$ and $\alpha>0$ satisfying $\alpha > \frac{\beta -
\frac12}{\beta-\frac14}$. Using the fact that $\lambda = \frac14$
(cf. Example
\ref{Y2}) this easily implies that the Hypothesis (G) is satisfied. As we have
seen in the Example \ref{Y2}, the correspond deterministic system \eqref{R00} is
null controllable in this case and we may conclude that for each $T>0$
the probability laws $\nu^x_T$, $x\in E$, are equivalent and the mapping
$x\mapsto \nu^x_T$ is continuous in the topology of pointwise convergence (the
strong Feller property holds).
\end{example}

\begin{remark}\label{rem:R29}
(i) In the previous example, the conditions \eqref{eq:R27}, \eqref{eq:R28}
(which
are clearly too strong for usual reaction-diffusion models) may be replaced by
\begin{equation*}
\begin{aligned}
|f(\xi)|& \le K_\b(1+|\xi|^{\rho}),\\
|f(\xi)-f(\eta)|\,\text{sign}(\xi-\eta) & \le K_\b(\xi-\eta), \quad \xi, \eta
\in \mathbb R
\end{aligned}
\end{equation*}
and (if $\beta >\frac12$)
\begin{equation}\label{eq:R32}
|f(\xi)-f(\eta)| \le K_\b(1+|\xi|^q+|\eta|^q)|\xi-\eta|^{\alpha}, \quad \xi,\eta
\in\mathbb R
\end{equation}
for some $K\ge 0$, $\rho, q >0$, and $\alpha \le 1$ satisfying $\alpha >
\frac{\beta-\frac12}{\beta-\frac14}$. This may be shown in the same way
as in the present example, but the proof of the analogue of Lemma \ref{lem:R13}
becomes technically
more complicated (and $E$ is no longer Hilbert space).

(ii) Using Example \ref{ex:S0} we may also consider the semilinear equation with
boundary noise
\begin{equation*}\left\{
\begin{aligned}
\frac{\partial y}{\partial t} (t,\xi)& =\frac{\partial}{\partial \xi}
\Big(p(\xi)\frac{\partial }{\partial \xi}y\Big)(t,\xi) + q(\xi) y(t,\xi),
&& (t,\xi)\in (0,T)\times (0,1),\\
y(0,\xi) &= x(\xi), &&  \xi\in (0,1),\\
 \frac{\partial}{\partial \xi}y(t,j) &= f_j(y(t,\cdot)) +
\sigma_j\dot B_j^{\beta}(t), && t\in (0,T), \ j\in\{0,1\},
\end{aligned}\right.
\end{equation*}
where $f_j : E\rightarrow \mathbb R$, $j\in\{0,1\}$, satisfy corresponding
continuity
and growth conditions. Unless both $\sigma_j$, $j\in\{0,1\}$, are zero we again
obtain that the strong Feller property holds for the above semilinear stochastic
equation.
\end{remark}

\end{document}